\documentclass{amsart}
\usepackage[utf8]{inputenc}
\usepackage{geometry}
\usepackage{amsmath}
\usepackage[english]{babel}
\usepackage[T1]{fontenc}
\usepackage{amsfonts}
\usepackage{float}
\usepackage{amssymb}
\usepackage[toc,page]{appendix}
\usepackage{longtable}
\usepackage{amsthm}
\theoremstyle{plain}
\usepackage[overload]{empheq}
\usepackage{enumerate}
\usepackage{mathtools}
\usepackage{array}
\usepackage{graphicx}
\usepackage{hyperref}
\usepackage{todonotes}
\usepackage[all]{xy}
\usepackage{tikz-cd}
\usepackage{multirow}

\usepackage{flafter}

\newcommand{\N}{\mathbb{N}}
\newcommand{\w}{\omega}
\newcommand{\OO}{\mathcal{O}}

\newcommand{\C}{\mathbb{C}}
\newcommand{\Z}{\mathbb{Z}}
\newcommand{\Q}{\mathbb{Q}}
\newcommand{\R}{\mathbb{R}}
\newcommand{\PP}{\mathbb{P}}
\newcommand{\sign}{\mathrm{sign}}
\newcommand{\rank}{\mathrm{rank}}
\newcommand{\Hom}{\mathrm{Hom}}

\newcommand{\Aut}{\mathrm{Aut}}

\newcommand{\rk}{\mathrm{rk}\,}
\newcommand{\disc}{\mathrm{disc}}

\newcommand{\Gr}{\mathbb{G}\mathrm{r}}

\newtheorem{thm}{Theorem}[section]
\newtheorem{thmINTRO}{Theorem}

\newtheorem{lem}[thm]{Lemma}
\newtheorem{prop}[thm]{Proposition}

\newtheorem{definition}[thm]{Definition}
\newtheorem{rem}[thm]{Remark}

\theoremstyle{definition}

\title{Irreducible Holomorphic Symplectic Manifolds with an action of $\Z_3^4 : \mathcal A_6$}
%\author{Paola Comparin, Romain Demelle, Pablo Quezada Mora}
\date{}

\author[P. Comparin]{Paola Comparin \textsuperscript{1}}
\address{\textsuperscript{1}
Departamento de Matem\'atica y Estad\'istica, Universidad de la Frontera, Av.
Francisco Salazar 1145, Temuco, Chile}
\email{paola.comparin@ufrontera.cl}

\author[R. Demelle]{Romain Demelle \textsuperscript{2}}
\address{\textsuperscript{2}
Universit\'{e} de Poitiers, Laboratoire de Math\'{e}matiques et Applications, UMR
7348 du CNRS, 11 bd Marie et
Pierre Curie, 86073 Poitiers Cedex 9,
France}
\email{romain.demelle@univ-poitiers.fr}

\author[P. Quezada Mora]{Pablo Quezada Mora \textsuperscript{3}}
\address{\textsuperscript{3}
Departamento de Matem\'atica y Estad\'istica, Universidad de la Frontera, Av.
Francisco Salazar 1145, Temuco, Chile}
\email{pabloquezada04@gmail.com}

\counterwithout{table}{section}

\keywords{Hyperk\"ahler manifolds, automorphism group, symplectic action}
\subjclass[2010]{Primary 53C26; Secondary: 14J50, 14C05}
\begin{document}
\maketitle
\begin{abstract}
Höhn and Mason classified the possible symplectic groups acting on an Irreducible Holomorphic Symplectic (IHS) manifold of K3$^{[2]}$-type, finding that $\Z_3^4 : \mathcal A_6$ is the symplectic group with the biggest order. 
In this paper, we study the possible IHS manifolds of K3$^{[2]}$-type with a symplectic action of $\Z_3^4 : \mathcal A_6$ and also admitting a non-symplectic automorphism. We characterize such IHS manifolds. In particular we prove that there exists an IHS manifold of K3$^{[2]}$-type with finite automorphism group of order 174960, the biggest possible order for the automorphism group of a IHS manifold of K3$^{[2]}$-type, and it is the Fano variety of lines of the Fermat cubic fourfold.
\end{abstract}

\section*{Introduction}
An Irreducible Holomorphic Symplectic (IHS) manifold is a complex compact Kähler smooth manifold which is simply connected and admits a unique (up to scalar) nowhere degenerate holomorphic 2-form $\omega_X$. IHS manifolds always have even dimension. The 2-dimensional IHS manifolds are K3 surfaces while in dimension greater than two there are four known examples of IHS manifolds up to deformation: Hilbert schemes of $n$ points on a K3 surface, called  of K3$^{[n]}$-type, generalized Kummer surfaces 
and O'Grady's examples in dimension six and ten. 
Given a finite group $G$ acting on an IHS manifold $X$ we can study the action induced by an element $g \in G$ on the 2-form $\w_X$. Since $H^{2,0}(X)$ is one-dimensional, $g$ acts by multiplication by a non-zero complex scalar $\alpha(g)\in\C^*$.
The automorphism $g$ is said to be \emph{symplectic} if it acts trivially on $\omega_X$ and \emph{non-symplectic} otherwise. Thus, given a finite group $G$ acting on $X$, we can consider the following exact sequence
\[
\xymatrix{
  1 \ar[r] & G_0 \ar[r] & G  \ar[r]^-{\alpha} & \mu_m  \ar[r] & 1 
}
\]
where the group $G_0$ is the kernel of $\alpha$, i.e. the group of automorphisms acting symplectically on $X$. 

Given an IHS manifold $X$, one of its most important properties is that the second cohomology group $H^2(X,\Z)$ is a lattice. In the case of K3 surface, the lattice $H^2(X,\Z)$ is a unimodular lattice and in his seminal paper \cite{NikulinGroups} Nikulin used this fact to study groups acting on K3 surfaces by the induced action on $H^2(X,\Z)$. In particular, he classified the finite abelian groups acting faithfully and symplectically on a K3 surface. Later, following Nikulin's steps, Mukai in \cite{Mukai} studied finite groups $G$ which act faithfully and symplectically on a K3 surface, proving that $|G| \leq 960$ and $G$ is isomorphic to a subgroup of the Mathieu group $M_{23}$. Furthermore if $|G| = 960$ then $G$ is isomorphic to the Mathieu group $M_{20}$. 
Kondō proved in \cite{Kondo} that the order of a finite group acting on a K3 surface is bounded by 3840 and it is only achieved when $X$ is the Kummer surface $\mathrm{Km}(E_i \times E_i)$, where $E_i$ is the usual notation for the elliptic curve $\C/(\Z\oplus i\Z)$. 
Bonnafé and Sarti in \cite{M20extension} studied K3 surfaces with finite maximal symplectic group $G_0=M_{20}$ which also admit a non-symplectic automorphism and found that there are three such K3 surfaces. Independently Brandhorst and Hashimoto in \cite{BrandhorstHashimoto} classified all finite groups acting faithfully on K3 surfaces with maximal symplectic part. 

In dimension bigger than two the lattice $H^2(X,\Z)$ is not unimodular, so the ideas have to be adapted. Beauville in \cite{BeauRemarks} generalized several results of Nikulin and Mongardi added new results in his Ph.D. thesis \cite{Mon-thesis}. In particular, these works give a pathway to classify the symplectic groups in the K3$^{[n]}$-type case. This was later applied by Höhn and Mason in \cite{HM} where they classify all the symplectic groups acting on IHS manifolds of K3$^{[2]}$-type. They found that there are 15 maximal symplectic such groups, and among them the group with the biggest order is $\Z_3^4 : \mathcal A_6$.

In the present article we generalize the results of Bonnaf\'e and Sarti in \cite{M20extension} to the case of K3$^{[2]}$-type IHS manifolds. We study the possible groups $G$ acting faithfully on an IHS manifold of K3$^{[2]}$-type such that the symplectic part $G_0$ is the group $\Z_3^4 : \mathcal A_6$. 
In other words, we consider the IHS manifolds of K3$^{[2]}$-type with a symplectic action of $\Z_3^4 : \mathcal A_6$ and such that they also admit a non-symplectic automorphism. We prove the following theorem.

\begin{thmINTRO}\label{theo1-intro}
   Let $X$ be an IHS manifold of K3$^{[2]}$-type that admits a symplectic action of $G_0=\Z_3^4:\mathcal A_6$ and such that the group $G_0$ can be extended to a group $G$ through a non-symplectic automorphism of order $m$. Then $m\in\{2,3,6\}$ and the transcendental lattice $T_X$ and the polarization $L$ are as in Propositions \ref{Order 2 cases} and \ref{Order 3 cases}.
\end{thmINTRO}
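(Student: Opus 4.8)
The plan is to reduce the problem to lattice theory on the Beauville--Bogomolov--Fujiki lattice $L := U^{\oplus 3}\oplus E_8(-1)^{\oplus 2}\oplus\langle -2\rangle$, which is isometric to $H^2(X,\Z)$ for $X$ of K3$^{[2]}$-type. First I would invoke the classification of Höhn and Mason to fix, once and for all, the coinvariant lattice $S_{G_0}=(H^2(X,\Z)^{G_0})^{\perp}$ of the symplectic action of $G_0=\Z_3^4:\mathcal A_6$, together with its embedding into $L$. Since the action is symplectic, $\w_X$ and $\bar\w_X$ lie in the invariant sublattice $T(G_0):=S_{G_0}^{\perp}$, so $S_{G_0}$ is negative definite and $T(G_0)$ has signature $(3,t)$ with $t$ small; recording $\rk T(G_0)$ and the discriminant form of $S_{G_0}$ (equivalently of $T(G_0)$) is the essential input for everything that follows.

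Next I would analyse the non-symplectic automorphism $\sigma$ of order $m$. Because $G_0$ is normal in $G$, the isometry induced by $\sigma$ normalises the $G_0$-representation and therefore preserves both $S_{G_0}$ and $T(G_0)$. The key leverage is that $T(G_0)$ has very small rank, so the possible actions of $\sigma$ on it are highly constrained. On $T(G_0)\otimes\C$ I would diagonalise $\sigma$ and use that $\sigma^*\w_X=\zeta_m\w_X$ for a primitive $m$-th root of unity $\zeta_m$; the transcendental lattice $T_X$ is then the smallest primitive $\sigma$-stable sublattice of $T(G_0)$ whose complexification contains $\w_X$, while the polarisation $L$ must lie in the $\sigma$-fixed, $G_0$-fixed part of $\NS(X)$. (Here I use that $S_{G_0}\subseteq \NS(X)$ automatically, and that $\w_X\perp \NS(X)$.)

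I would then split into the cases $m\in\{2,3,6\}$ and read off the module structure. For $m=2$ the lattice $T_X$ sits in the $(-1)$-eigenspace of $\sigma$ on $T(G_0)$ and no divisibility constraint on $\rk T_X$ arises; for $m=3$ and $m=6$ the action endows $T_X$ with the structure of a module over $\Z[\zeta_m]$, forcing $\rk T_X$ to be divisible by $\phi(m)=2$ and constraining the discriminant form to be compatible with such an action. In each case the remaining freedom is pinned down by requiring that $T_X$ and $L$ glue to an admissible primitive sublattice of $T(G_0)$ respecting the $\sigma$-action; a finite discriminant-form computation then produces exactly the lattices listed in Propositions \ref{Order 2 cases} and \ref{Order 3 cases}. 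The order-$6$ case, combining an order-$2$ and an order-$3$ action, is expected to single out the rank-$2$ transcendental lattice realised by the Fano variety of lines of the Fermat cubic fourfold.

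The main obstacle will be the passage from \emph{arithmetic} possibility to \emph{geometric} realisability: given a candidate $\sigma$-action on $T(G_0)$ compatible with $G_0$, one must check that the pair $(G_0,\sigma)$ is actually induced by automorphisms of an IHS manifold. This requires controlling the normaliser of $G_0$ inside $O(L)$ modulo $G_0$, verifying that the prescribed eigenvalue on $\w_X$ is attained by a genuine isometry, and then applying the global Torelli theorem for K3$^{[2]}$-type manifolds together with the surjectivity of the period map to construct $X$ and to guarantee that the automorphisms preserve a Kähler class. Ensuring that the non-symplectic action does not secretly force extra symplectic symmetry (i.e. that $G_0$ remains exactly the symplectic part) is the delicate point that ties the lattice computation back to the geometry.
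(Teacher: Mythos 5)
Your overall frame (reduce to lattice theory, fix the $G_0$-action via H\"ohn--Mason, let the non-symplectic automorphism act on the small invariant lattice) is the paper's frame, but your proposal omits the one idea that makes the classification \emph{finite}. After the reduction, the data to classify is a primitive vector $L$ (generating the $G$-invariant part of $\NS(X)$) inside the explicit positive definite rank-3 lattice $S^{G_0}(X)$ with Gram matrix $\bigl(\begin{smallmatrix} 6&3&0\\ 3&6&0\\ 0&0&6\end{smallmatrix}\bigr)$, with $T_X=L^{\perp}\cap S^{G_0}(X)$. A priori there are infinitely many $\OO(S^{G_0}(X))$-orbits of such $L$, since $L^2=6n$ is unbounded; moreover, for $m=2$ your eigenvalue/gluing conditions are \emph{vacuous}: $\sigma$ acts as $-\mathrm{id}$ on $T_X$ and as $\mathrm{id}$ on $L$, and this always preserves any order-$2$ glue group, so it cuts out nothing. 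The paper's finiteness mechanism, absent from your proposal, is Lemma \ref{LemmaForIndex}(3) applied to $\mu_m=G/G_0$ acting on $S^{G_0}(X)$: the quotient $S^{G_0}(X)/(\langle L\rangle\oplus T_X)$ embeds into $A_L$ (hence is cyclic) and is $m$-torsion, so the index is $1$ or $m$ (Proposition \ref{Index2}). Comparing determinants, $6n\cdot\det(T_X)=162\cdot[\text{index}]^2$, which bounds $n$ (Proposition \ref{PossibleEmbedding}: $n\in\{1,3,4\}$ for $m=2$; $L=\pm h$ for $m=3$). Only then does one enumerate primitive vectors of norm $6n$ up to the order-24 group $\OO(S^{G_0}(X))$, computed explicitly in Lemma \ref{Isometries}, and read off $T_X=L^{\perp}\cap S^{G_0}(X)$ case by case to obtain exactly the lists of Propositions \ref{Order 2 cases} and \ref{Order 3 cases}. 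Without the index bound, your ``finite discriminant-form computation'' is an assertion, not an argument. Also, for $m=3$ the binding constraint is not divisibility of $\rk T_X$ by $2$ (vacuous, since $\rk T_X=2$ is forced by $\rho(X)=21$ and projectivity), but the classical fact (Lemma \ref{Admits}) that a rank-2 positive definite even lattice admits an order-3 isometry iff its Gram matrix is $\bigl(\begin{smallmatrix}2a&a\\ a&2a\end{smallmatrix}\bigr)$; this, combined with the index bound, is what eliminates every $L\neq\pm h$.

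A second misreading concerns what must be proved. Theorem \ref{theo1-intro} is a pure necessity statement: \emph{if} such a pair $(X,G)$ exists, \emph{then} $(L,T_X)$ lies in the list. The Torelli/period-map realizability that you single out as ``the main obstacle'' plays no role here; in the paper it is the content of a separate existence theorem, proved with Theorem \ref{MongardiExistence} and Nikulin's gluing lemmas (Lemmas \ref{Overlattices}, \ref{ExtendOver}, \ref{ExtendComp}), not of Theorem \ref{theo1-intro}. Likewise there is no need to rule out ``extra symplectic symmetry'': the hypothesis already fixes $G_0$ as the symplectic part. What your sketch should instead make explicit is the reduction of $m=6$ to the two prime cases: for $\sigma$ non-symplectic of order $6$, apply the order-3 classification to $\sigma^2$ and the order-2 classification to $\sigma^3$, which pins down $L=h$ and $T_X\cong A_2(3)$; this is how the order-6 case of the theorem follows from Propositions \ref{Order 2 cases} and \ref{Order 3 cases}.
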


In Propositions \ref{Order 2 cases}, \ref{prop-Fermat}, \ref{Order 3 cases} and \ref{teo_final} and in Table \ref{OtherCases} we show more details and the characterization of these IHS manifolds. 

In particular as an analogous of the work of Kondō in \cite{Kondo} we are able to prove that there exists an IHS manifold of K3$^{[2]}$-type with the biggest automorphism group.

\begin{thmINTRO} \label{theo2-intro}
Let $X$ be an IHS manifold of K3$^{[2]}$-type with an action of $G$ such that
\begin{equation}
\xymatrix{
  1 \ar[r] & \Z_3^4 : \mathcal A_6 \ar[r] & G  \ar[r]^-{\alpha} & \mu_6  \ar[r] & 1 
},
\end{equation}
which would mean that $G$ has the biggest possible order $174960$. Then, we characterize $X$ by giving the transcendental lattice $T_X$, the polarization $L$ and its divisibility ${\rm div}(L)$  in Proposition \ref{teo_final}.
Moreover, the Fano variety of lines $F(Y)$ of the Fermat cubic fourfold \[ Y= \{ (x_0:\dots:x_5) : x_0^3 + \dots + x_5^3 = 0 \}\subset \PP^5 \]
is an example of IHS manifold of K3$^{[2]}$-type admitting the action of this $G$.
\end{thmINTRO}

\noindent We suspect that this IHS is the unique with the biggest automorphism group, see Subsection \ref{TowardsUnicity}.
A part of these results are also proven independently in a paper of Wawak \cite{W}, where he computes all the finite groups acting faithfully on IHS manifolds of K3$^{[2]}$-type with maximal symplectic part. 

The paper is organized as follows: in Section \ref{sec-prelim} we recall some preliminaries and fix notations about lattices and IHS manifolds, while in Section \ref{sec-groups} we study the action of automorphisms groups on IHS manifolds and present the example of the Fano variety of lines on a cubic fourfold. In Section \ref{sec-ext} we study the groups acting on an IHS manifolds of K3$^{[2]}$-type such that the symplectic part is $\Z_3^4 : \mathcal A_6$ and we prove the two main Theorems \ref{theo1-intro} and \ref{theo2-intro}.

\section*{Acknowledgments}
We are very grateful to Alessandra Sarti for suggesting the problem and for useful conversations. We are also grateful to Chiara Camere for sharing her insights with us. This work was started when P.Q.M. visited the Universit\'e de Poitiers; authors have been partially supported by Programa de cooperaci\'on cient\'ifica ECOS-ANID C19E06 and Math AmSud-ANID 21 Math 02.
P.C. and R.D. have been partially supported by Fondecyt Iniciaci\'on en la Investigaci\'on N.11190428 and P.C. has been partially supported by Fondecyt Regular N.1200608. P.Q.M. has been supported by ANID Becas-Doctorado Nacional N.21191367.

\section{Preliminaries and notation} \label{sec-prelim}
In this section we collect some background and known results about lattices and IHS manifolds. The standard references for lattices is \cite{Nikulin} and for IHS manifolds are \cite{BeauCons} and \cite{Debarre}.

    \subsection{Lattices}
    A \emph{lattice} $\mathcal L$ is a free $\Z$-module equipped with a non-degenerate symmetric bilinear form
    \begin{equation*}
        (\cdot, \cdot)_{\mathcal L} : \mathcal L \times \mathcal L \to \Z.
    \end{equation*}
    We omit the subscript if there is no confusion and for $v \in \mathcal L$ we write $ v^2:= (v, v)_{\mathcal L }$. The \emph{rank} of the lattice $\mathcal L$, denoted by $\rank(\mathcal L)$, 
    is the dimension of the real vector space $\mathcal L \otimes_\Z \R$ and the $\emph{determinant}$ of $\mathcal L$, denoted by $\det(\mathcal L)$, is the determinant of the Gram matrix $\{(e_i,e_j)\}_{i,j}$ for any basis $\{ e_1, \dots, e_n\}$ of $\mathcal L$. 
    The lattice is said to be $\emph{non-degenerate}$ if $\det(\mathcal L) \neq 0$ and $\emph{unimodular}$ if $|\det(\mathcal L)| = 1$. A lattice is $\emph{even}$ if $v^2 \in 2\Z$ for all $v \in \mathcal L$, \emph{odd} otherwise.  The \emph{signature} of a non-degenerate lattice is the pair $\sign(\mathcal L) := (s_+, s_-)$, where $s_+$ and $s_-$ are the numbers of positive and negative eigenvalues of $\mathcal L \otimes_\Z \R$, respectively. A lattice is \emph{positive (resp. negative) definite} if $\sign(\mathcal L) = (r,0)$ (resp. $(0,r)$) where $r = \rank(\mathcal L)$, otherwise it is an \emph{indefinite} lattice. A sublattice $M \subset \mathcal L$ is called \emph{primitive} if $M/\mathcal L$ is torsion-free.

    Let $\mathcal L$ be a lattice and let $\mathcal L^\vee := \Hom_\Z(\mathcal L,\Z)$ be the dual of $\mathcal L$. It can also be described as
    \[ \mathcal L^\vee := \{ v \in\mathcal L \otimes \Q : (v,w) \in \Z \ \forall w \in\mathcal L \}. \]
    Since $\mathcal L$ is a sublattice of $\mathcal L^\vee$ of the same rank, the \emph{discriminant group} $A_{\mathcal L} := \mathcal L^\vee / \mathcal L$ is a finite abelian group of order $|\det(\mathcal L)|$. We denote by $\pi :\mathcal L^\vee \to A_{\mathcal L}$ the quotient map. If $\mathcal L$ is even, the bilinear form on $\mathcal L$ induces a quadratic form $q_{\mathcal L} : A_{\mathcal L} \to \Q/2\Z$. A subgroup $H$ of $A_{\mathcal L}$ is called \emph{isotropic} if $q_{\mathcal L}|_H = 0$. Given a vector $v\in \mathcal L$, the \emph{divisibility} of $v$ is ${\rm div}(v,\mathcal L):=\gcd\{ (v,w): w\in \mathcal L\}$.

    We denote by $U$ the unique even unimodular lattice of rank two and signature $(1,1)$ and by $E_8$ the even, positive-definite lattice associated to the Dynkin diagram with the same name. Given a lattice $\mathcal L$, two integers $r,s\in\Z$ and $q$ the quadratic form on $\mathcal{L}$ associated to the bilinear form, then $\langle r \rangle$ denote the rank one lattice with value $q(l) = r$ on a generator $l\in \mathcal L$. Moreover $\mathcal L(s)$ denote the lattice with the form $(\cdot,\cdot)_{\mathcal L(s)} = s(\cdot,\cdot)_{\mathcal L}$.

    Since we will encounter it later, we recall that the Leech lattice $\mathcal L_{24}$ is the unique (up to isometry) rank 24 lattice which is positive definite,  even, unimodular and such that it admits no roots, where a root is a vector of square 2.
    
    An isomorphism of lattices preserving the bilinear form is called an $\emph{isometry}$. We denote by $\OO(\mathcal L)$ and $\OO(A_{\mathcal L})$ the group of isometries of a lattice $\mathcal L$ and its discriminant group $A_{\mathcal L}$, respectively. There exists a natural homomorphism $\OO(\mathcal{L}) \to \OO(A_\mathcal{L})$.
    
    Let $T$ be a fixed even lattice. An even lattice $M$ is an \textit{overlattice} of $T$ if there is an embedding $T \hookrightarrow M$ such that $M/T$ is a finite abelian group. 
    
    \begin{lem}[{\cite[Proposition 1.4.1]{Nikulin}}]\label{Overlattices} Let $T$ be an even lattice.
    There is a natural one-to-one correspondence between the finite index even overlattices $T \subset M$ and the isotropic subgroups $H \subset A_T$.
    \end{lem}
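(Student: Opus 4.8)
The plan is to exhibit the correspondence explicitly in both directions and check that the two constructions are mutually inverse. In one direction, suppose $T \subset M$ is a finite index even overlattice. Since $M$ is even it is in particular integral, so $M \subset M^\vee$, and dualizing the finite index inclusion $T \subset M$ gives $M^\vee \subset T^\vee$. Combining these yields the chain $T \subset M \subset T^\vee$, so that $H := M/T$ is a subgroup of $A_T = T^\vee/T$. To see that $H$ is isotropic, take any $x \in M$: by evenness $x^2 \in 2\Z$, hence $q_T(\pi(x)) = x^2 \bmod 2\Z = 0$, and since $\pi(M) = H$ this shows $q_T|_H = 0$. Thus $M \mapsto M/T$ sends finite index even overlattices to isotropic subgroups.

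In the other direction, given an isotropic subgroup $H \subset A_T$, set $M := \pi^{-1}(H) \subset T^\vee$. By construction $T \subset M$ and $M/T = H$ is finite, so $M$ is a finite index $\Z$-module of the correct rank carrying the restriction of the $\Q$-valued form on $T^\vee$. The content is to check that this restricted form is even and integral. Writing $b_T \colon A_T \times A_T \to \Q/\Z$ for the bilinear form induced on the discriminant group, the polarization identity
\[
b_T(\bar x, \bar y) = \tfrac{1}{2}\bigl(q_T(\bar x + \bar y) - q_T(\bar x) - q_T(\bar y)\bigr)
\]
shows that isotropy of $H$ for $q_T$ forces $b_T|_{H \times H} = 0$; equivalently $(x,y) \in \Z$ for all $x, y \in M$, so $M$ is integral. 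Likewise $q_T|_H = 0$ gives $x^2 \in 2\Z$ for every $x \in M$, so $M$ is even. Hence $H \mapsto \pi^{-1}(H)$ sends isotropic subgroups to finite index even overlattices.

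It remains to verify that these assignments are inverse to one another, which is immediate: for an overlattice $M \subset T^\vee$ one has $\pi^{-1}(M/T) = M$, and for a subgroup $H \subset A_T$ one has $\pi^{-1}(H)/T = H$. Naturality is the observation that both constructions commute with the action of $\OO(T)$ on $T^\vee$ and the induced action on $A_T$, so an isometry of $T$ carries overlattices to overlattices compatibly with its action on isotropic subgroups.

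The main obstacle, and the only step requiring genuine care, is the integrality of the form on $M = \pi^{-1}(H)$: the hypothesis is phrased as vanishing of the $\Q/2\Z$-valued quadratic form $q_T$ on $H$, whereas integrality is a statement about the $\Q/\Z$-valued bilinear form $b_T$. The polarization identity above is exactly what bridges the two, and it is here that evenness of $T$ (which guarantees that $q_T$ is well defined and that the factor of $\tfrac{1}{2}$ behaves correctly) is essential.
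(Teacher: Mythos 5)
Your proof is correct and follows the same construction as the paper: the forward map $M \mapsto M/T$ via the chain $T \subset M \subset M^\vee \subset T^\vee$, and the inverse map $H \mapsto \pi^{-1}(H)$. You simply make explicit the verifications (isotropy of $M/T$, and integrality/evenness of $\pi^{-1}(H)$ via the polarization identity) that the paper leaves to the reader.
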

    
    \begin{proof}
    First, there is a natural chain of embeddings:
    \[ T \hookrightarrow M \hookrightarrow M^\vee \hookrightarrow T^\vee \]    
    such that we can associate to $M$ the isotropic subgroup of $A_T$ given by
    \[ H := M/T \subset M^\vee/T \subset T^\vee/T = A_T.  \]

    Conversely, we can associate to an isotropic group $H \subset A_T$ the overlattice $\pi^{-1}(H)$ where $\pi$ is the quotient map from $T^{\vee}$ to $A_T$.
    \end{proof}
    
    The following result is an application of \cite[Subsection 4]{Nikulin}.
    
    \begin{lem}\label{ExtendOver}
        Let $M$ be an overlattice of $T$. Then an element $\varphi \in \OO(T)$ can be extended to $M$ if and only if $\bar{\varphi}(H) = H$, where $H$ is the  isotropic subgroup of $A_T$ corresponding to $M$ and $\bar{\varphi} \in \OO(A_T)$ is the image of $\varphi \in \OO(T)$ under the projection $\pi$.
    \end{lem}
    In the case $T \subset M$ is a primitive sublattice, we can consider its orthogonal lattice $S = T^{\perp_M}$. Then $M$ is an overlattice of $S \oplus T$. We denote by
    \[ H_M := M/(S \oplus T) \subset A_{S \oplus T} \simeq A_S \oplus A_T \]
    the isotropic subgroup of $A_S \oplus A_T$ corresponding to $M$. We denote by $p_T : A_{T \oplus S} \to A_T$ and $p_S : A_{T \oplus S} \to A_S$ the projections and we define $M_T = p_T(H_M)$ and $M_S = 
    p_S(H_M)$. Since $T$ is primitive in $M$, the restrictions of the projections to $H_M$ are isomorphisms. We define the map $\gamma : p_S \circ (p_T^{-1})|_{M_T} : M_T \to M_{S}$. The map $\gamma$ is called the \emph{gluing morphism} and it is an anti-isometry, i.e. $q_T(x) = -q_S(\gamma(x))$ for every $x \in M_T$.

    \begin{lem}[{\cite[Corollary 1.5.2]{Nikulin}}]\label{ExtendComp} 
        Let $M$ be an even lattice with a primitive sublattice $T \subset M$ and orthogonal lattice $S$. For every isometry $\varphi \in \OO(T)$, there exists an isometry $\phi \in \OO(M)$ such that $\phi|_{T} = \varphi$ if and only if there exist an isometry $\psi \in \OO(S)$ such that $\bar{\psi} \circ \gamma = \gamma \circ \bar{\varphi}$.
    \end{lem}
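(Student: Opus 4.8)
The plan is to prove both implications by translating the extension problem into a statement about the isotropic subgroup $H_M \subset A_S \oplus A_T$ and then invoking Lemma \ref{ExtendOver}. The crucial observation, which I would isolate first, is that because $T$ is primitive in $M$ the group $H_M$ is precisely the graph of the gluing morphism: since $p_T|_{H_M}$ is an isomorphism onto $M_T$ and $\gamma = p_S \circ (p_T^{-1})|_{M_T}$, every element of $H_M$ has the form $(\gamma(t), t)$ with $t \in M_T$ (reading elements of $A_S \oplus A_T$ as pairs $(s,t)$). An isometry of $S \oplus T$ of the form $\psi \oplus \varphi$ induces $\bar\psi \oplus \bar\varphi$ on $A_S \oplus A_T$, which sends $(\gamma(t), t)$ to $(\bar\psi(\gamma(t)), \bar\varphi(t))$. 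This pair lies in $H_M$ exactly when $\bar\psi(\gamma(t)) = \gamma(\bar\varphi(t))$, so $\bar\psi \oplus \bar\varphi$ preserves $H_M$ if and only if $\bar\psi \circ \gamma = \gamma \circ \bar\varphi$. This single equivalence is the engine of the whole proof.

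For the forward implication I would start from an extension $\phi \in \OO(M)$ with $\phi|_T = \varphi$ and first check that $\phi$ preserves $S$: for $s \in S$ and $t' \in T$ one has $(\phi(s), \varphi(t')) = (\phi(s), \phi(t')) = (s, t') = 0$ because $\phi$ is an isometry and $\varphi = \phi|_T$, so $\phi(s) \perp T$ and hence $\phi(s) \in S$; applying the same argument to $\phi^{-1}$ gives $\phi(S)=S$. Setting $\psi := \phi|_S \in \OO(S)$, the restriction $\phi|_{S \oplus T}$ equals $\psi \oplus \varphi$. Since $\phi$ preserves both $M$ and $S \oplus T$, it induces an automorphism of $M/(S \oplus T) = H_M$, which means $\bar\psi \oplus \bar\varphi$ preserves $H_M$; by the equivalence just established this yields $\bar\psi \circ \gamma = \gamma \circ \bar\varphi$.

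For the converse, given $\psi \in \OO(S)$ with $\bar\psi \circ \gamma = \gamma \circ \bar\varphi$, I would consider the isometry $\psi \oplus \varphi \in \OO(S \oplus T)$. The compatibility relation, via the same equivalence, says exactly that the induced map $\bar\psi \oplus \bar\varphi$ on $A_{S \oplus T}$ preserves the isotropic subgroup $H_M$ corresponding to the overlattice $M$. Lemma \ref{ExtendOver} then guarantees that $\psi \oplus \varphi$ extends to an isometry $\phi \in \OO(M)$, and by construction $\phi|_T = \varphi$, which is what we want.

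The steps above are essentially formal once the graph description of $H_M$ is in hand, so I do not expect a serious obstacle; the one point requiring care is the bookkeeping around the domain of $\gamma$. The equation $\bar\psi \circ \gamma = \gamma \circ \bar\varphi$ only typechecks if $\bar\varphi(M_T) \subseteq M_T$, so I would make explicit that this containment --- equivalently $\bar\varphi(M_T) = M_T$, since $M_T$ is finite and $\bar\varphi$ injective --- is implicit in the hypothesis, and that it holds automatically in the forward direction because $\bar\psi \oplus \bar\varphi$ permutes $H_M$ while $M_T = p_T(H_M)$. The other thing to keep in mind is merely that $\gamma$ is an anti-isometry rather than an isometry, which is irrelevant for the argument since we use only that $\bar\psi$ and $\bar\varphi$ are automorphisms of the discriminant groups and that $H_M$ is their common graph.
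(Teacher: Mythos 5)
Your proof is correct. Note that the paper does not prove this lemma at all --- it is quoted verbatim from Nikulin (\cite[Corollary 1.5.2]{Nikulin}) --- so there is no internal proof to compare against; your argument is the standard one, and it is built exactly from the ingredients the paper sets up: the identification of $H_M$ with the graph of the gluing morphism $\gamma$ (valid because $T$, and automatically $S=T^{\perp_M}$, are primitive, so both projections restrict to isomorphisms on $H_M$), the observation that $\bar\psi\oplus\bar\varphi$ preserves this graph if and only if $\bar\psi\circ\gamma=\gamma\circ\bar\varphi$, and Lemma \ref{ExtendOver} applied to the overlattice $S\oplus T\subset M$ for the converse. The two delicate points you flag are indeed the ones that need saying: that $\phi(S)=S$ in the forward direction (your argument via $\phi^{-1}$ closes it), and that the equation $\bar\psi\circ\gamma=\gamma\circ\bar\varphi$ presupposes $\bar\varphi(M_T)=M_T$, which you correctly justify in the forward direction from $M_T=p_T(H_M)$ and read as implicit in the hypothesis for the converse.
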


\begin{definition}
Let $\mathcal L$ be a lattice and $G \subset \OO(\mathcal L)$. We define
\begin{align*}
    S^G(\mathcal L) &:= \{ x \in\mathcal L : g(x) = x \ \forall g \in G \}, \\
    S_G(\mathcal L) &:= {S^{G}(\mathcal L)}^{\perp_{\mathcal L}},
    \end{align*}
    the invariant and co-invariant lattices, respectively.
\end{definition}

\begin{rem}
    The invariant lattice $S^G(\mathcal L)$ and the co-invariant lattice $S_G(\mathcal L)$ are both primitive sublattices of $\mathcal{L}$.
\end{rem}

The following useful result is well known: 
    \begin{lem}\label{LemmaForIndex} Let $\mathcal L$ be a lattice and $G \subset \mathcal O(\mathcal L)$. Then the following hold:
    \begin{enumerate}
        \item $S^G(\mathcal L)$ contains $\sum_{g \in G} g(v)$ for all $v \in\mathcal  L$.
        \item $S_G(\mathcal L)$ contains $v - g(v)$ for all $v \in \mathcal L$ and $g \in G$.
        \item $\mathcal L/(S^G(\mathcal L) \oplus S_G(\mathcal L))$ is of $|G|$-torsion.
    \end{enumerate}
\end{lem}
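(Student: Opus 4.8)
The plan is to prove the three items in sequence, since (3) follows at once from (1) and (2). For (1), I would fix $v \in \mathcal L$, set $w := \sum_{g \in G} g(v)$, and apply an arbitrary $h \in G$: since $h$ is linear, $h(w) = \sum_{g \in G} (hg)(v)$, and because left multiplication by $h$ permutes the elements of $G$, reindexing the sum gives $h(w) = \sum_{g' \in G} g'(v) = w$. As $h$ was arbitrary, $w$ is fixed by all of $G$, hence $w \in S^G(\mathcal L)$.

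For (2), I would show $v - g(v)$ is orthogonal to the whole invariant lattice. Take any $x \in S^G(\mathcal L)$, so that $g(x) = x$; since $g \in \OO(\mathcal L)$ preserves the bilinear form, $(g(v), x) = (g(v), g(x)) = (v, x)$, whence $(v - g(v), x) = 0$. Thus $v - g(v) \in S^G(\mathcal L)^{\perp_{\mathcal L}} = S_G(\mathcal L)$. For (3), set $n := |G|$ and, writing $n v = \sum_{g \in G} v$, split
\[ n v = \sum_{g \in G} g(v) + \sum_{g \in G}\bigl(v - g(v)\bigr), \]
where the first summand lies in $S^G(\mathcal L)$ by (1) and the second in $S_G(\mathcal L)$ by (2). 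Hence $n v \in S^G(\mathcal L) + S_G(\mathcal L)$ for every $v \in \mathcal L$, so the quotient is annihilated by $n$.

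The only point requiring care is that the sum $S^G(\mathcal L) + S_G(\mathcal L)$ is genuinely direct, so that the quotient in (3) is the one written. I would verify $S^G(\mathcal L) \cap S_G(\mathcal L) = 0$ by passing to $V := \mathcal L \otimes \Q$ and using the averaging operator $\pi := \tfrac{1}{n}\sum_{g \in G} g$. This $\pi$ is idempotent and projects $V$ onto $V^G = S^G(\mathcal L) \otimes \Q$, and it is self-adjoint for the form since $(\pi x, y) = \tfrac1n\sum_g (g x, y) = \tfrac1n\sum_g (x, g^{-1} y) = (x, \pi y)$. Consequently $V^G \perp \ker \pi$ and $V = V^G \oplus \ker\pi$, forcing the restriction of the form to $V^G$ to be non-degenerate and $\ker \pi = (V^G)^{\perp}$; in particular $S^G(\mathcal L) \otimes \Q$ and $S_G(\mathcal L) \otimes \Q$ meet only in $0$, hence so do the lattices themselves. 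I expect no serious obstacle here: the statement is elementary once the group-averaging trick is in hand, the self-adjointness of $\pi$ being the one small verification that makes the orthogonal splitting work.
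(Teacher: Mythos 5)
Your proof is correct and follows essentially the same route as the paper's: item (1) by reindexing the sum under left multiplication, item (2) by the orthogonality computation $(x, v - g(v)) = (x,v) - (g(x),g(v)) = 0$ for invariant $x$, and item (3) by the identical decomposition $|G|\,v = \sum_{g \in G} g(v) + \sum_{g \in G} \bigl(v - g(v)\bigr)$. The one point you add---checking via the self-adjoint averaging projector $\pi = \tfrac{1}{|G|}\sum_{g} g$ on $\mathcal L \otimes \Q$ that $S^G(\mathcal L) \cap S_G(\mathcal L) = 0$, so the sum in (3) is genuinely direct---is silently taken for granted in the paper's proof, and is a worthwhile supplement, since a priori a sublattice of a non-degenerate lattice can meet its own orthogonal complement nontrivially.
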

\begin{proof}
    The proof of $(1)$ is obvious. For $(2)$, let $l \in S^G(\mathcal L)$, then for $v \in \mathcal L$ and $g \in G$ we have:
        \[ (l,v) = (g(l), g(v)) = (l, g(v)) \implies (l, v-g(v)) = 0.\]
    Thus, $v - g(v) \in S^G(\mathcal L)^\perp = S_G(\mathcal L)$.

    For $(3)$, let $l \in\mathcal  L$. Then
        \[ |G|l = \sum_{g \in G} g(l) + \sum_{g \in G} (l - g(l))\]
    and the first term lies in $S^G(\mathcal L)$ and the second in $S_G(\mathcal L)$. 
\end{proof}

    \subsection{Irreducible Holomorphic Symplectic (IHS) Manifolds.} A complex compact Kähler smooth manifold $X$ is called \emph{Irreducible Holomorphic Symplectic} (IHS) manifold
    if $X$ is simply connected and  $H^0(X,\Omega^2_X) = \C\omega_X$, where $\omega_X$ is a nowhere degenerate holomorphic 2-form on $X$. 
    The 2-dimensional IHS manifolds are K3 surfaces while for dimensions greater than two there are 4 known deformation families (see \cite{Debarre}): the IHS manifolds which are deformation equivalent to the Hilbert scheme of $n$ points on a K3 surface (K3$^{[n]}$-type), to the generalized Kummer surfaces, or to O'Grady's examples in dimension 6 or 10. 
    
    In this paper we study IHS manifolds of K3$^{[2]}$-type, i.e. manifolds which are deformation equivalent to the Hilbert scheme of $2$ points on a K3 surface. The construction for any $n$ is the following: let $S$ be a K3 surface; the Hilbert-Douady space $S^{[n]}$ parametrizes the zero-dimensional subschemes $(Z,\OO_Z)$ of the surface $S$ of length $n$ (i.e. $\dim_\C(\OO_Z)=n$). By \cite{BeauCons} it is a hyperkähler manifold of dimension $2n$. 
    
    Let $S^{(n)} := S^n/\mathcal S_n$, where $\mathcal S_n$ is the symmetric group on $n$ letters. The \emph{Hilbert-Chow} morphism, defined naturally as
    \begin{align*}
        s: S^{[n]} &\longrightarrow S^{(n)} \\
        Z &\longmapsto \sum_{p\in S} l(\OO_{Z,p}) p,
    \end{align*}
    is a desingularization of $S^{(n)}$, where $l(\OO_{Z,p})$ is the length of $\OO_{Z,p}$. 
    The case $n=2$ was studied by Fujiki in \cite{Fujiki} and its geometric description is particularly simple to work with. In this case, $S^{[2]} \to S^{(2)}$ is the blow-up of the symmetric square $S^{(2)}= (S\times S)/\mathcal S_2$ along the diagonal.

    If $X$ is an IHS manifold, the second cohomology group $H^2(X,\Z)$ equipped with the Beauville-Bogomolov-Fujiki (BBF) form has a lattice structure which is even, non-degenerate with signature $(3,b_2-3)$. For $X$ of K3$^{[n]}$-type (with $n \geq 2$), $H^2(X,\Z)$ has rank 23 and it is isometric to 
    \begin{equation}\label{Lattice K3^2}
        \Lambda_n := U^{\oplus 3} \oplus E_8(-1)^{\oplus 2} \oplus \langle -2(n-1) \rangle
    \end{equation}
    The N\'eron-Severi group of $X$ is defined as
    \[ NS(X) := H^{1,1}(X)_\R \cap H^2(X,\Z).\]
    We can see it as a lattice by considering the restriction of the BBF form to $H^2(X,\Z)$. It is even, non-degenerate and has signature $(1,\rho(X)-1)$ if $X$ is projective, where $\rho(X) := \rank \ NS(X)$ is called the \emph{Picard number} of $X$. Its orthogonal lattice $T_X := NS(X)^{\perp_{H^2(X,\Z)}}$ is called the \emph{transcendental lattice} of $X$. 
    
    A \emph{marking} on $X$ is an isometry $\eta: H^2(X,\Z) \to \Lambda$ of lattices. A \emph{marked IHS manifold} is a pair $(X,\eta)$ where $X$ is an IHS manifold together with an isometry $\eta: H^2(X,\Z) \to \Lambda$ on $X$. Two marked IHS manifolds $(X_1,\eta_1$) and $(X_2,\eta_2)$ are isomorphic if there exists an isomorphism $f: X_1 \to X_2$ such that $\eta_2 = \eta_1 \circ f^*$. 
    There is a coarse moduli space $\mathcal{M}_\Lambda$ parametrizing isomorphism classes of marked IHS manifolds. We call  \emph{Period domain} the set
    \[ \Omega_\Lambda := \{ [x] \in \PP(\Lambda \otimes \C) : x^2 = 0, \ (x, \bar x) > 0 \} \]
    The $\emph{period}$ of the marked pair $(X,\eta)$ is the point $\eta(H^{2,0}(X)) \in \PP(\Lambda \otimes \C)$ and we can check it lies on the Period domain.

The two following important results for IHS manifolds are used in what follows.
    \begin{thm}[Local Torelli Theorem] The period map
    \[%begin{align*}
        \mathcal{P} : \mathcal{M}_\Lambda \longrightarrow \Omega_\Lambda,\quad 
        (X,\eta) \longmapsto \eta(H^{2,0}(X))
    \]%end{align*}
    is a local isomorphism. 
    \end{thm}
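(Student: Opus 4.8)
The plan is to reduce the global statement to a pointwise computation of the differential of the period map and then invoke the inverse function theorem for complex manifolds. Fix a marked IHS manifold $(X_0,\eta_0)$ and let $\sigma_0 = \eta_0(\omega_{X_0})$ be its period. First I would recall that the period map $\mathcal P$ is holomorphic: this is the standard consequence of Griffiths' theory of variations of Hodge structure, since the Hodge filtration on $H^2(X,\C)$ varies holomorphically as the complex structure of $X$ deforms. Because being a local isomorphism is a local condition, it then suffices to show that $d\mathcal P$ is an isomorphism at every point, and for this I would work on the Kuranishi (universal) deformation of $X_0$, which furnishes a local model for $\mathcal M_\Lambda$ near $(X_0,\eta_0)$.

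The key structural input is that deformations of an IHS manifold are unobstructed, so the Kuranishi space is smooth of dimension $h^1(X_0,T_{X_0})$, with tangent space at $[X_0]$ canonically $H^1(X_0,T_{X_0})$. The holomorphic symplectic form $\omega_{X_0}$ induces an isomorphism $T_{X_0} \xrightarrow{\sim} \Omega^1_{X_0}$ by contraction $v \mapsto \iota_v\omega_{X_0}$, whence $H^1(X_0,T_{X_0}) \cong H^1(X_0,\Omega^1_{X_0}) = H^{1,1}(X_0)$, a space of dimension $b_2-2$.

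On the target side I would compute the tangent space to the period domain directly from its defining equations. Since $\Omega_\Lambda$ is the open subset $\{(x,\bar x)>0\}$ of the smooth quadric $\{x^2=0\}\subset\PP(\Lambda\otimes\C)$, its tangent space at $[\sigma_0]$ is $\sigma_0^{\perp}/\C\sigma_0$; using the Hodge decomposition together with $\sigma_0^2=0$ one identifies $\sigma_0^{\perp}=H^{2,0}\oplus H^{1,1}$, so that $T_{[\sigma_0]}\Omega_\Lambda \cong H^{1,1}(X_0)$. In particular $\dim\Omega_\Lambda = b_2-2 = \dim\mathcal M_\Lambda$, so it remains only to prove that $d\mathcal P$ is injective, equivalently surjective. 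Here Griffiths transversality identifies $d\mathcal P$ with the cup-product/contraction map $H^1(X_0,T_{X_0}) \to \Hom(H^{2,0},H^{1,1})$, $\theta \mapsto (\omega \mapsto \iota_\theta\omega)$, which is precisely the contraction isomorphism of the previous paragraph. Hence $d\mathcal P$ is an isomorphism, and the holomorphic inverse function theorem yields that $\mathcal P$ is a local biholomorphism.

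The main obstacle is the unobstructedness statement: without it one cannot guarantee that the local moduli space is smooth of the expected dimension, and the identification of its tangent space with $H^1(X_0,T_{X_0})$ would break down. I would therefore cite the Bogomolov--Tian--Todorov theorem, valid for compact K\"ahler manifolds with trivial canonical bundle and hence for IHS manifolds, rather than reprove it. A secondary technical point is to verify carefully that the derivative of the period map coincides with the contraction map; this is a local computation in the Kuranishi family using the Gauss--Manin connection, and it is precisely the step where the holomorphic symplectic form $\omega_{X_0}$ enters decisively to match the two tangent spaces.
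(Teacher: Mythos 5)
Your argument is correct: it is the standard proof of Local Torelli for IHS manifolds, going back to Beauville. Note, however, that the paper itself gives no proof of this statement at all --- it is recalled in the preliminaries as a classical background result (part of the material the authors summarize from the literature), so there is no ``paper proof'' to compare against; your proposal supplies exactly the argument that the cited literature contains. The ingredients you assemble are the right ones: unobstructedness of deformations (Bogomolov--Tian--Todorov, applicable since the symplectic form trivializes $K_X$), the contraction isomorphism $H^1(X_0,T_{X_0})\cong H^{1,1}(X_0)$ induced by $\omega_{X_0}$, the identification $T_{[\sigma_0]}\Omega_\Lambda\cong \sigma_0^{\perp}/\C\sigma_0\cong H^{1,1}(X_0)$ coming from the Hodge-theoretic orthogonality properties of the Beauville--Bogomolov--Fujiki form, and Griffiths' computation of the differential of the period map as cup product with the Kodaira--Spencer class. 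One small point you gloss over: to regard the Kuranishi space as a local model of $\mathcal M_\Lambda$ you should remark that the marking $\eta_0$ extends uniquely over the (contractible) Kuranishi base, since $R^2\pi_*\Z$ is a local system; this is routine but is the step that makes the period map well defined on the local model.
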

    Let $\mathcal{M}^0_\Lambda$ be a fixed connected component of $\mathcal{M}_\Lambda$. We consider the restriction of $\mathcal{P}$ to  $\mathcal{M}^0_\Lambda$:

    \[ P_0 := P|_{\mathcal{M}^0_\Lambda} \to \Omega_\Lambda \]
    \begin{thm}[Global Torelli Theorem] \label{surj} The map $P_0$ is surjective. Moreover, for every $x \in \Omega_\Lambda$, the fibre $P_0^{-1}(x)$ consists of pairwise birational manifolds.
    \end{thm}
    
An important example for the rest of the paper is the Fano variety of lines of a cubic fourfold, whose definition is the following: let $Y \subset \PP^5$ be a smooth cubic fourfold. The \emph{Fano variety of lines} on $Y$ is defined as
\[ F(Y) := \{ [\ell] \in \Gr(1,5) \ :\  \ell \subset Y \}. \]
Fano varieties of lines on cubic fourfolds were first studied by Beauville and Donagi in \cite{FanoVariety}, where the authors proved that they are IHS manifolds of K3$^{[2]}$-type.  
 Moreover, $F(Y)$ comes equipped with a polarization $h$ which is the restriction of the Plücker polarization of $\Gr(1,5)$ to $F(Y)$. Furthermore we have that $h^2 = 6$ and ${\rm div(h)} = 2$.

\section{Groups acting on IHS manifolds} \label{sec-groups}
Let $g \in \Aut(X)$ be an automorphism of finite order. If $g^*(\omega_X) = \omega_X$, then $g$ is called \emph{symplectic}. Otherwise, $g$ is called \emph{non-symplectic} and $g^*$ acts on $\omega_X$ as the multiplication by a scalar $\alpha(g)\in\C^*$. For a finite group $G \subset \Aut(X)$ we have the following exact sequence:
\begin{equation}\label{sequence}
\xymatrix{
  1 \ar[r] & G_0:=\ker\alpha \ar[r] & G  \ar[r]^-{\alpha} & \mu_m  \ar[r] & 1 
}.
\end{equation}
    The group $G_0$ is the \emph{symplectic} part of $G$, i.e. those automorphisms which act trivially on $\w_X$, and we call $\mu_m$ the non-symplectic part. 
Since automorphisms of $G$ have finite order, the group $\mu_m$ is cyclic.
    \begin{lem}[\cite{BeauRemarks}]\label{ActionTX} Let $X$ be an IHS manifold and let $G \subset \Aut(X)$ be a finite group. Then:
        \begin{enumerate}
            \item $g \in G$ acts trivially on $T_X$ if and only if $g \in G_0$.
            \item The representation of $\mu_m$ on $T_X \otimes \Q$ splits as the direct sum of irreducible representations of the cyclic group $\mu_m$ having maximal rank. 
        \end{enumerate}
    \end{lem}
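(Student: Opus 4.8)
The engine of both parts is a minimality property of the transcendental lattice, which I would extract first from the definition $T_X = NS(X)^{\perp}$: namely, $T_X$ is the \emph{smallest} primitive sublattice $M \subset H^2(X,\Z)$ whose complexification $M \otimes \C$ contains $\omega_X$. To see this, suppose $M$ is primitive with $\omega_X \in M \otimes \C$; then every $x \in M^{\perp}$ satisfies $(x,\omega_X) = 0$, so $x$ has vanishing $(2,0)$- and $(0,2)$-components and hence $x \in H^{1,1}(X) \cap H^2(X,\Z) = NS(X)$. Thus $M^{\perp} \subseteq NS(X)$, and taking orthogonals of primitive sublattices gives $T_X = NS(X)^{\perp} \subseteq M$. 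I would also record two standing facts: each $g \in G$ acts on $H^2(X,\Z)$ by a Hodge isometry $g^*$, hence preserves $NS(X)$ and therefore $T_X$; and for any finite-order isometry, two eigenspaces $E_\lambda, E_\mu$ of its $\C$-linear extension are orthogonal unless $\lambda\mu = 1$, since $(g^*v, g^*w) = (v,w)$ forces $(\lambda\mu - 1)(v,w) = 0$.

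For part $(1)$, suppose first $g \in G_0$, i.e. $g^*\omega_X = \omega_X$. The invariant sublattice $S^{\langle g^*\rangle}(T_X)$ is primitive in $T_X$, hence in $H^2(X,\Z)$, and its complexification (the fixed space of the semisimple operator $g^*$) contains $\omega_X$. Minimality forces $T_X \subseteq S^{\langle g^*\rangle}(T_X)$, so $g^*$ is the identity on $T_X$. Conversely, if $g^*$ acts trivially on $T_X$ it fixes $\omega_X \in T_X \otimes \C$, so $\alpha(g) = 1$ and $g \in G_0$. This is exactly the equivalence claimed.

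For part $(2)$, by $(1)$ the $G$-action on $T_X$ has kernel $G_0$, so it factors through a faithful action of the cyclic group $\mu_m = G/G_0$; a generator $\sigma$ acts on $\omega_X$ by a primitive $m$-th root of unity $\zeta_m$. Over $\Q$, the irreducible representations of $\Z/m$ are indexed by the divisors $d \mid m$, the one attached to $d$ being $\Q(\zeta_d)$ of rank $\phi(d)$, on which $\sigma$ has the primitive $d$-th roots of unity as eigenvalues. By semisimplicity of $\Q[\mu_m]$ I would split $T_X \otimes \Q = \bigoplus_{d \mid m} V_d$ into isotypic components and let $T' = V_m$ be the component for $d = m$. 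Since $\omega_X$ lies in the $\zeta_m$-eigenspace and $\zeta_m$ is a primitive $m$-th root, $\omega_X \in T' \otimes \C$. Setting $M$ to be the primitive closure in $T_X$ of $T' \cap T_X$, one has $M \otimes \Q = T'$, $M$ is primitive in $H^2(X,\Z)$, and $\omega_X \in M \otimes \C$; minimality again yields $M = T_X$, so $T_X \otimes \Q = T'$ and only the top representation $\Q(\zeta_m)$ of maximal rank $\phi(m)$ occurs.

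The delicate points, where I would spend the most care, are two. First, establishing the minimality characterization cleanly at the integral level and justifying that taking primitive closures does not enlarge rational spans, so that minimality of $T_X$ can legitimately be applied to the sublattices produced in $(1)$ and $(2)$. Second, the representation-theoretic identification in $(2)$: confirming that $\omega_X$ genuinely sits in the $d=m$ isotypic component and that the rational decomposition is compatible with the lattice structure; here the eigenspace-orthogonality relation of the first paragraph, together with the positivity $(\omega_X,\bar\omega_X) > 0$, is the tool that pins down which eigenvalues can occur. Everything else is bookkeeping with the exact sequence \eqref{sequence} and the classification of irreducible $\Q$-representations of a cyclic group.
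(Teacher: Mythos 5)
Your proof is correct: the minimality characterization of $T_X$ (smallest primitive sublattice whose complexification contains $\omega_X$) is established properly, and both parts follow from it exactly as you argue. The paper itself gives no proof of this lemma --- it is quoted from Beauville's \emph{Some remarks on K\"ahler manifolds with $c_1=0$} --- and your argument is essentially the standard one behind that reference (Beauville/Nikulin phrase the same minimality as irreducibility of the rational Hodge structure on $T_X\otimes\Q$, then kill the cyclotomic polynomial $\Phi_m(\sigma^*)$, which is your isotypic decomposition in different clothing), so there is no substantive divergence to report.
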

This lemma implies that for an IHS manifold $X$ of K3$^{[n]}$-type we have a bound for $m$:
    \begin{equation}\label{BoundM}
       \mathcal T(m) \leq 23-\rho(X),
    \end{equation}
    where $\mathcal T$ is the Euler totient function.

Let $X$ be an IHS manifold of K3$^{[2]}$-type and let $G \subset \Aut(X)$ be a finite group acting on $X$. Let $(\eta,\Lambda)$ be a marking of $X$. By \cite{Debarre}
there is an injective map
        \[ \eta : \Aut(X) \to \mathcal O(\Lambda). \]
    One can identify $G$ with its image on $\mathcal O(\Lambda)$ and denote the invariant and co-invariant lattices by the action of $G$ respectively by $S^G(X) := S^G(\Lambda)$ and $S_G(X) := S_G(\Lambda)$.

    By Mongardi \cite{Mongardi1}, we have the following characterization of the co-invariant lattice when the action is by the symplectic part $G_0$ of $G$.
    \begin{lem}\cite[Lemma 2.10]{Mongardi1}\label{InvLattice} Let $X$ be an IHS manifold of K3$^{[n]}$-type and let $G_0$ be the symplectic part of a finite group $G$ acting on $X$. Then
    \begin{itemize}
        \item $S_{G_0}(X)$ is non-degenerate and negative definite.
        \item $S_{G_0}(X)$ contains no element with square -2.
        \item $T(X) \subset S^{G_0}(X)$ and $S_{G_0}(X) \subset NS(X)$.
        \item $G_0$ acts trivially on $A_{S_{G_0}(X)}$.
    \end{itemize}
    \end{lem}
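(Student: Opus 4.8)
The plan is to treat the four assertions in order of increasing difficulty, each feeding the next. I would begin with the inclusions of the third bullet, on which the others rest. By Lemma~\ref{ActionTX}(1) every $g\in G_0$ acts trivially on $T(X)$, so $T(X)$ is fixed pointwise by $G_0$ and hence $T(X)\subseteq S^{G_0}(X)$ by definition of the invariant lattice. Since $NS(X)$ is primitive in the non-degenerate lattice $H^2(X,\Z)$ and $T(X)=NS(X)^{\perp}$, taking orthogonal complements gives $S_{G_0}(X)=S^{G_0}(X)^{\perp}\subseteq T(X)^{\perp}=NS(X)$.

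For the first bullet I would work over $\R$. Averaging any Kähler class over $G_0$ yields a $G_0$-invariant class $\kappa_0$ of positive square; being of type $(1,1)$ it is BBF-orthogonal to $H^{2,0}(X)\oplus H^{0,2}(X)$. As $G_0$ is symplectic it fixes $\omega_X$, so $\mathrm{Re}(\omega_X)$ and $\mathrm{Im}(\omega_X)$ lie in $S^{G_0}(X)\otimes\R$ and span a positive-definite plane, and together with $\kappa_0$ they span a positive-definite $3$-space inside $S^{G_0}(X)\otimes\R$. Because $H^2(X,\Z)$ has signature $(3,b_2-3)$, all positive directions are thereby exhausted in the invariant part; since $G_0$ is finite and acts by isometries, $H^2(X,\R)=(S^{G_0}(X)\otimes\R)\oplus(S_{G_0}(X)\otimes\R)$ orthogonally with both summands non-degenerate, and the positivity count forces $S_{G_0}(X)$ to be negative definite.

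The main obstacle is the second bullet, the absence of $(-2)$-classes, where the geometry of $X$ rather than pure lattice theory must enter. Suppose $\delta\in S_{G_0}(X)$ with $\delta^2=-2$; then $\delta\in NS(X)$ and, as each $g(\delta)$ again lies in $S_{G_0}(X)$, Lemma~\ref{LemmaForIndex} forces $\sum_{g\in G_0}g(\delta)=0$. On a K3 surface one finishes at once: Riemann--Roch makes $\delta$ or $-\delta$ effective, every $g(\delta)$ is then the class of a non-zero effective divisor, and pairing the vanishing sum with an ample class is absurd. In the K3$^{[n]}$-type case this must be replaced, since not every $(-2)$-class bounds a wall of the Kähler cone. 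I would argue by a dichotomy: if $\delta$ bounds a wall, then $\kappa_0$, which satisfies $(\kappa_0,\delta)=0$ since $\kappa_0\in S^{G_0}(X)\otimes\R$ and $\delta\in S_{G_0}(X)$, would lie on that wall, contradicting that a Kähler class is interior to its chamber; if $\delta$ bounds no wall, the reflection $R_\delta$ preserves the positive and Kähler cones, fixes $\kappa_0$ and fixes the period $\omega_X$, so by the Hodge-theoretic Torelli theorem it is induced by a symplectic involution whose co-invariant lattice is the rank-one $\langle -2\rangle$, excluded by the independently known structure of symplectic involutions (co-invariant $E_8(-2)$). Making both branches rigorous, via the wall-and-chamber structure of the movable cone and Markman's monodromy results, is where I expect most of the work to lie.

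Finally, the fourth bullet I would obtain from the gluing formalism of Lemmas~\ref{Overlattices} and~\ref{ExtendComp}. Since $G_0$ fixes $S^{G_0}(X)$ pointwise it acts trivially on $A_{S^{G_0}(X)}$; realizing $H^2(X,\Z)$ as an even overlattice of $S^{G_0}(X)\oplus S_{G_0}(X)$ with gluing anti-isometry $\gamma$ and applying Lemma~\ref{ExtendComp} with $\varphi=\mathrm{id}$, the resulting compatibility $\bar g\circ\gamma=\gamma$ forces $\bar g$ to act trivially on the glued subgroup of $A_{S_{G_0}(X)}$. When $H^2(X,\Z)$ is unimodular this gluing is total and the claim is immediate; for K3$^{[n]}$-type, where the discriminant $A_{H^2(X,\Z)}\cong\Z/2(n-1)$ is small and $g$ acts trivially on it, a short bookkeeping comparing the glued subgroup with all of $A_{S_{G_0}(X)}$ completes the argument.
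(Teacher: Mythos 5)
First, a point of comparison: the paper does not prove this lemma at all --- it is quoted from Mongardi \cite[Lemma 2.10]{Mongardi1} --- so your proposal can only be measured against the standard proof in that reference. Your first and third bullets are correct and are exactly the standard arguments: averaging a K\"ahler class to get an invariant class $\kappa_0$, noting that $\mathrm{Re}(\omega_X)$, $\mathrm{Im}(\omega_X)$, $\kappa_0$ span a positive-definite $3$-space inside $S^{G_0}(X)\otimes\R$, and using the signature $(3,b_2-3)$; and deducing the inclusions from Lemma~\ref{ActionTX} and orthogonality. (For $S_{G_0}(X)\subset NS(X)$ it is cleaner to observe that $S_{G_0}(X)\perp\omega_X$ directly, since $\omega_X\in S^{G_0}(X)\otimes\C$, which avoids any degeneracy issue with $NS(X)$ when $X$ is not projective.) Your fourth bullet also follows the standard line, but the ``short bookkeeping'' conceals a genuine step: equivariance of the gluing only gives that $\bar g$ fixes pointwise the glued subgroup of $A_{S_{G_0}(X)}$, and one must still control the quotient of $A_{S_{G_0}(X)}$ by this subgroup (a quotient of $A_{H^2(X,\Z)}\cong\Z/2(n-1)\Z$) and exclude a unipotent correction; this is precisely where non-unimodularity makes the K3$^{[n]}$ case harder than Nikulin's K3 case.

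The genuine gap is in the second bullet. Branch 1 of your dichotomy is the actual proof, \emph{provided} one cites the nontrivial theorem it silently requires: for K3$^{[n]}$-type manifolds every $\delta\in NS(X)$ with $\delta^2=-2$ is a wall divisor (MBM class), so that $\delta^{\perp}$ meets no K\"ahler chamber --- for K3$^{[2]}$ this is Mongardi's description of the K\"ahler cone walls (square $-2$, or square $-10$ with divisibility $2$). Granting this, $\kappa_0\in\delta^{\perp}$ is an immediate contradiction and branch 2 never occurs. Branch 2 as you argue it, however, is circular: to exclude a symplectic involution with co-invariant lattice $\langle -2\rangle$ you invoke the classification of symplectic involutions on K3$^{[n]}$-type manifolds (co-invariant lattice $E_8(-2)$), but that classification, due to Mongardi, is itself proved \emph{using} the very lemma under discussion (negative definiteness and absence of $(-2)$-classes in co-invariant lattices of symplectic actions, followed by lattice classification). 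The independent $E_8(-2)$ theorem of Nikulin concerns K3 surfaces only and does not transfer. So the dichotomy does not bypass the hard input; it merely relocates it to a statement whose only known proofs depend on the conclusion. To repair the argument you must cite the wall-divisor/MBM theorem (Mongardi for K3$^{[2]}$; Amerik--Verbitsky, Bayer--Hassett--Tschinkel in general), after which branch 1 alone proves the bullet and branch 2 is vacuous.
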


Then, using the surjectivity of the period map of Theorem \ref{surj}, Mongardi in \cite{Mongardi1} proved the following theorem, which we state for the case of K3$^{[2]}$-type. Let $\mathcal{L}$ be a lattice isometric to $\Lambda_2=U^{\oplus 3} \oplus E_8(-1)^{\oplus 2} \oplus \langle -2 \rangle$ as in \eqref{Lattice K3^2}.
    \begin{thm}[{\cite[Theorem 7.2.2]{Mongardi1}}]\label{MongardiExistence}
        Let $G \subset \OO(\mathcal{L})$ be a finite group. Then $G$ is induced by a group of symplectic automorphisms for some marked IHS manifold $(X,\eta)$ of K3$^{[2]}$-type if, and only if, the following holds:
        \begin{itemize}
            \item $S_G(\mathcal{L})$ is negative-definite
            \item $S_G(\mathcal{L})$ contains no elements $v$ of norm $v^2 = -2$ or norm $v^2 = -10$ and $v / 2 \in \mathcal L^\vee$.
    \end{itemize}
\end{thm}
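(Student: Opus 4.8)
The plan is to prove the two implications of the criterion by combining the Hodge-theoretic Torelli theorem for manifolds of K3$^{[2]}$-type (in the form due to Markman and Verbitsky) with the classification of the classes that cut out the walls of the Kähler cone. Throughout I identify $G$ with its image in $\OO(L)$ and write $S_G=S_G(L)$, $S^G=S^G(L)=(S_G)^{\perp_L}$, so that $G$ acts as the identity on $S^G$.

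\emph{Necessity.} Suppose $G$ is induced by a group of symplectic automorphisms of a marked pair $(X,\eta)$. By Lemma \ref{InvLattice}, $S_G$ is negative definite and contains no vector of square $-2$; it remains to rule out classes $v$ with $v^2=-10$ and $v/2\in L^\vee$. For this I would use that a finite group of symplectic automorphisms preserves a Kähler class: averaging an arbitrary Kähler class over $G$ (the Kähler cone is convex and $G$-stable) produces a $G$-invariant Kähler class $\kappa$, which necessarily lies in $S^G\otimes\R$ and so is orthogonal to $S_G$. On a K3$^{[2]}$-type manifold the walls of the Kähler cone inside the positive cone are exactly the hyperplanes $v^\perp$ with $v\in\NS(X)$ satisfying $v^2=-2$, or $v^2=-10$ with $v/2\in L^\vee$. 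If $S_G$ contained such a $v$, then $\kappa\perp S_G$ would force $(\kappa,v)=0$, placing $\kappa$ on a wall and contradicting that it is an interior Kähler class. Hence $S_G$ contains no such $v$, which yields both forbidden conditions.

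\emph{Sufficiency.} Conversely, assume the two lattice conditions hold. Since $S_G$ is negative definite and $L$ has signature $(3,20)$, the invariant lattice $S^G$ has signature $(3,20-\mathrm{rk}\,S_G)$ and therefore contains positive-definite $2$-planes. Using surjectivity of the period map (Theorem \ref{surj}) I would realize a generic $\omega\in S^G\otimes\C$ with $\omega^2=0$ and $(\omega,\bar\omega)>0$ as the period of a marked pair $(X,\eta)$ of K3$^{[2]}$-type. Because $G$ fixes $S^G$ pointwise, $g(\omega)=\omega$ for every $g\in G$, so $G$ preserves the Hodge structure and acts trivially on the symplectic form, and $S_G\subseteq\NS(X)$ since $S_G\perp\omega$. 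By Markman's description, $\mathrm{Mon}^2(X)$ consists of the orientation-preserving isometries acting by $\pm\mathrm{id}$ on $A_L\cong\Z/2\Z$; the action on $A_L$ is automatically trivial here, and $G$ preserves the orientation of the positive three-space because it fixes both $\omega$ and a positive class, whence $G\subset\mathrm{Mon}^2(X)$.

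It remains to produce a $G$-invariant Kähler class, and this is the step where the hypotheses on $S_G$ are essential and which I expect to be the main obstacle. Any positive class $\kappa\in S^G\otimes\R$ is automatically $G$-invariant, so I would take $\kappa$ generic subject to positivity. For a wall divisor $v\in\NS(X)$, write $v=v^G+v_G$ with $v^G\in S^G\otimes\R$ and $v_G\in S_G\otimes\R$; then $(\kappa,v)=(\kappa,v^G)$. A generic positive $\kappa$ can lie on the wall $v^\perp$ only if $v^G=0$, i.e. $v\in S_G$; but by hypothesis $S_G$ contains no class with $v^2=-2$ or with $v^2=-10,\ v/2\in L^\vee$, so no wall divisor lies in $S_G$. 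Hence a generic positive $\kappa\in S^G\otimes\R$ meets no wall, and after possibly composing with a monodromy reflection to land in the fundamental chamber (equivalently, adjusting the marking) it lies in the Kähler cone of $X$. Since $G$ fixes $\omega$, fixes $\kappa$, and is contained in $\mathrm{Mon}^2(X)$, the Hodge-theoretic Torelli theorem gives for each $g\in G$ a symplectic automorphism of $X$ inducing $g$, and these assemble into the required action. The genuinely deep inputs are the classification of the wall divisors of K3$^{[2]}$-type as precisely the $(-2)$- and $(-10,\ \mathrm{div}\,2)$-classes and the exact form of the Torelli theorem; verifying that the chosen chamber is the Kähler cone of $X$ itself, rather than of a birational model, also requires care.
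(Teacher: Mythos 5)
This statement is quoted in the paper directly from Mongardi's work \cite{Mongardi1} and carries no internal proof, so there is nothing in the paper itself to compare your argument against; your sketch in fact reconstructs what is essentially Mongardi's original proof (necessity via Lemma \ref{InvLattice} together with an averaged $G$-invariant K\"ahler class that cannot lie on a wall; sufficiency via surjectivity of the period map, the identification $\mathrm{Mon}^2(L)=\OO^+(L)$ for K3$^{[2]}$-type, the wall-divisor description of the K\"ahler cone by the $(-2)$- and $(-10,\ \mathrm{div}=2)$-classes, and the Hodge-theoretic global Torelli theorem of Markman--Verbitsky). The outline is correct, with the caveats you yourself flag---local finiteness of the walls so that ``generic'' makes sense, and adjusting the marking so the chosen chamber is the K\"ahler cone of $X$ rather than of a birational model---being precisely the points where the cited deep results must be invoked.
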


\section{K3$^{[2]}$-type manifolds with a symplectic action of $\Z_3^4 : \mathcal A_6$} \label{sec-ext}

In a recent work \cite{HM}, Höhn and Mason classify all symplectic groups that can act on IHS manifolds of K3$^{[2]}$-type.

\begin{thm}[{\cite{HM}}]\label{SymClass} Let $X$ be an IHS manifold of K3$^{[2]}$-type and let $G \subset \Aut(X)$ be a finite symplectic group. Then $G$ is isomorphic to one of the following:
\begin{enumerate}
    \item[(a)] A subgroup of $M_{23}$ with at least four orbits in its natural action on 24 elements. 
    \item[(b)] A subgroup of one of two subgroups $(\Z_3^{1+4}: \Z_2) \times \Z_2^2$ and $\Z_3^4:\mathcal A_6$ of the Conway group  $Co_0$ associated to a $\mathcal S$-lattice in $\mathcal L_{24}$.
\end{enumerate}
where a $\mathcal S$-lattice is a sublattice of the Leech lattice $M \subset \mathcal L_{24}$ on which all %of its 
elements are congruent modulo $2\mathcal L_{24}$ to an element of $M$ of norm 0, -4 or -6. 
\end{thm}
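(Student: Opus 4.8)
The plan is to convert the statement into a problem about definite lattices and then transport it into the Leech lattice, following the strategy Kondō used for K3 surfaces and that Mongardi adapted to the K3$^{[2]}$-type setting. First I would invoke Theorem \ref{MongardiExistence}: realizing a finite symplectic group $G$ as a subgroup of $\OO(L)$ with $L\cong\Lambda_2$, the group is induced by symplectic automorphisms of some K3$^{[2]}$-type manifold if and only if the co-invariant lattice $\Omega:=S_G(L)$ is negative definite and contains no vector $v$ with $v^2=-2$, nor any $v$ with $v^2=-10$ and $v/2\in L^\vee$. By Lemma \ref{InvLattice}, $\Omega$ is even, $G$ acts on it with no nonzero fixed vectors, and $G$ acts trivially on the discriminant group $A_\Omega$. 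Since the signature of $\Lambda_2$ is $(3,20)$, a negative-definite sublattice has rank at most $20$, so the task reduces to classifying the finite groups that act faithfully on an even, negative-definite, root-free lattice of rank at most $20$ while fixing its discriminant.

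The key step is to embed $\Omega(-1)$ primitively into the Leech lattice $\Lambda_{24}$. Because $\Omega(-1)$ is positive definite, even, and root-free, Nikulin's criteria for primitive embeddings into even unimodular lattices, together with the fact that $\Lambda_{24}$ is the unique root-free rank-$24$ even unimodular lattice, yield such an embedding. The $\mathcal{S}$-lattice condition—that every class of $\Omega(-1)$ modulo $2\Lambda_{24}$ meet a vector of norm $0$, $4$ or $6$—records the minimal-norm structure of the cosets of $\Lambda_{24}/2\Lambda_{24}$, whose nonzero classes have minimal norm $4$, $6$ or $8$, together with Mongardi's exclusion of the special norm-$10$ vectors; note that the no-$(-2)$ condition is automatic, since $\Lambda_{24}$ has minimal norm $4$.

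Next I would transport the group action. Because $G$ fixes $A_\Omega$ and, by unimodularity of $\Lambda_{24}$, the orthogonal complement $\Omega(-1)^{\perp}$ inside $\Lambda_{24}$ carries the discriminant form $-q_{\Omega}$, the gluing criterion of Lemma \ref{ExtendComp} lets me extend every $g\in G$ to an isometry of $\Lambda_{24}$ acting as the identity on $\Omega(-1)^{\perp}$. This realizes $G$ as a finite subgroup of $\Aut(\Lambda_{24})=Co_0$ that stabilizes the $\mathcal{S}$-lattice $\Omega(-1)$ and fixes its complement pointwise.

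It then remains to classify the finite subgroups of $Co_0$ arising in this way, and this is where I expect the main difficulty to lie. Using the $24$-dimensional representation of $Co_0$ and its maximal subgroups, the pointwise stabilizers of $\mathcal{S}$-lattices split into two families. One family sits inside the monomial subgroup $2^{12}:M_{24}$: here the fixed subspace is exactly $\Omega(-1)^{\perp}\otimes\R$, of dimension $24-\rk\Omega$, and for a permutation action this equals the number of orbits on the $24$ coordinates; hence $G$ is a subgroup of $M_{23}$, the stabilizer of a coordinate in $M_{24}$, whose number of orbits is at least four—four rather than the value five appearing for K3 surfaces, because $\Lambda_2$ has rank $23$ instead of $22$ and so allows one more unit of co-invariant rank. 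The remaining stabilizers are the two exceptional $3$-local subgroups $(\Z_3^{1+4}:\Z_2)\times\Z_2^2$ and $\Z_3^4:\mathcal{A}_6$. The hard part is precisely this final enumeration: one must list all $\mathcal{S}$-lattices up to the $Co_0$-action and compute their pointwise stabilizers, a largely computer-assisted analysis of the Leech lattice geometry and of the conjugacy classes of subgroups of the Conway group, which constitutes the technical core of Höhn and Mason's argument.
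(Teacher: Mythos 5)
The paper itself contains no proof of this statement: it is quoted directly from H\"ohn--Mason \cite{HM}, so your attempt can only be measured against their argument. Your skeleton does match theirs (and its ancestors Kond\=o, Gaberdiel--Hohenegger--Volpato, Mongardi): Theorem \ref{MongardiExistence} and Lemma \ref{InvLattice} reduce the problem to finite groups acting on $\Lambda_2$ with an even, negative-definite, root-free co-invariant lattice and trivial action on its discriminant, which one then transports into the Leech lattice and classifies inside $Co_0$. But the step that carries all the weight fails as you justify it. Nikulin's embedding machinery, applied to a \emph{definite} unimodular target, only produces a complement $T$ with discriminant form $-q_{\Omega(-1)}$ and hence a primitive embedding of $\Omega(-1)$ into \emph{some} rank-$24$ positive-definite even unimodular lattice, i.e.\ some Niemeier lattice $N$; unlike the indefinite case, it cannot tell you which one, since these lattices are far from unique in their genus. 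And root-freeness of $\Omega(-1)$ does not make $N$ root-free: roots of $N$ can be glue vectors $a+b$ with $a\in\Omega(-1)^\vee$, $b\in T^\vee$ of fractional norms. For instance, the diagonal copy of $E_8(2)$ in $E_8\oplus E_8\subset E_8^{\oplus 3}$ is primitive and root-free inside a Niemeier lattice with $720$ roots, so the uniqueness of $\Lambda_{24}$ among root-free Niemeier lattices buys you nothing. Landing $G$-equivariantly in the Leech lattice is a genuinely hard theorem (the lattice-theoretic appendix of Gaberdiel, Hohenegger and Volpato, reused by Huybrechts and by \cite{HM}), proved by quite different means: one embeds $\Omega$ into $U\oplus\Lambda_{24}(-1)$ and shows that a primitive isotropic vector $w$ orthogonal to $\Omega$ can be chosen so that $w^{\perp}/\Z w$ has no roots.

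The second gap is the bookkeeping around the $\mathcal S$-lattice condition and Mongardi's $(-10)$-vectors, which you conflate and misplace. As you paraphrase it (``every class of $\Omega(-1)$ modulo $2\Lambda_{24}$ meets a vector of norm $0$, $4$ or $6$'') the condition is nearly empty, since every coset of $\Lambda_{24}/2\Lambda_{24}$ contains a vector of norm at most $8$; the actual definition requires congruence to a vector \emph{of the sublattice $M$ itself} of norm $0$, $4$ or $6$, which is highly restrictive. Moreover, in \cite{HM} the $\mathcal S$-lattices do not sit on the co-invariant side at all: the exceptional groups of case (b) arise as stabilizers in $Co_0$ of $\mathcal S$-lattices, which live inside the rank-$4$ invariant part $\Lambda_{24}^{G}$, not inside the rank-$20$ lattice $\Omega(-1)$ to which you attach the condition. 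The dichotomy between (a) and (b) is driven by whether the fixed lattice meets a frame-type class of $\Lambda_{24}/2\Lambda_{24}$ (forcing $G$ into the monomial group $2^{12}:M_{24}$ and towards $M_{23}$) or is an $\mathcal S$-lattice (leading, via Curtis' classification, to the two $3$-local groups); your version, in which ``stabilizers of $\mathcal S$-lattices split into a monomial family and the two exceptional ones,'' inverts this. Relatedly, Mongardi's exclusion of $v\in S_G$ with $v^2=-10$ and $v/2\in L^{\vee}$ --- the one feature distinguishing the K3$^{[2]}$ problem from the sigma-model/derived-category problem, where the answer is essentially all subgroups of $Co_0$ fixing a rank-$4$ sublattice --- is never actually translated into the $Co_0$ picture in your argument, although it is exactly what carves (a) and (b) out of the much longer list of fixed-point sublattices. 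Since the final enumeration is, as you acknowledge, deferred to H\"ohn--Mason's computer-assisted classification, what remains is a sensible strategic outline whose two decisive steps are, respectively, incorrectly justified and delegated back to the very result being proved.
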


For each case the authors in \cite{HM} compute the possible invariant and co-invariant lattices. In particular there are 13 isomorphism classes of subgroups of type $(a)$ and 2 subgroups of type $(b)$ that are maximal and for each maximal case $\rk S_{G}(X) = 20$.

Let $G$ be a group such that its symplectic part $G_0$ is maximal and such that $G/G_0$ is not trivial, i.e. in the exact sequence \eqref{sequence} one has $m>1$. The following Lemma establishes which values of $m$ are admissible in \eqref{sequence}. 

\begin{lem}\label{ms}
    In the previous setting, $m \in\{2,3,4,6\}$.
\end{lem}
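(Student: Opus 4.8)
The plan is to combine the rank information attached to a maximal symplectic group with the totient bound \eqref{BoundM}. First I would invoke the computation recorded just after Theorem \ref{SymClass}: since $G_0$ is maximal, its co-invariant lattice satisfies $\rk S_{G_0}(X) = 20$. By Lemma \ref{InvLattice} we have the inclusion $S_{G_0}(X) \subset NS(X)$, so the Picard number is bounded below by
\[ \rho(X) = \rk NS(X) \ge \rk S_{G_0}(X) = 20. \]

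Next I would substitute this into the bound \eqref{BoundM} coming from Lemma \ref{ActionTX}, which controls how $\mu_m$ may act on the transcendental lattice, and obtain
\[ \mathcal T(m) \le 23 - \rho(X) \le 23 - 20 = 3. \]
It then only remains to read off the admissible values of $m$ from the arithmetic condition $\mathcal T(m) \le 3$. The key elementary observation is that the Euler totient equals $1$ for $m \in \{1,2\}$ and is \emph{even} for every $m \ge 3$; in particular it never takes the value $3$. Hence $\mathcal T(m) \le 3$ is equivalent to $\mathcal T(m) \le 2$, and the integers satisfying the latter are precisely $m \in \{1,2,3,4,6\}$, with $\mathcal T(m) = 1$ for $m \in \{1,2\}$ and $\mathcal T(m) = 2$ for $m \in \{3,4,6\}$. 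Since by hypothesis the extension $G/G_0 \simeq \mu_m$ is non-trivial, i.e.\ $m > 1$, I would discard $m = 1$ and conclude $m \in \{2,3,4,6\}$, as claimed.

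I do not expect a genuine obstacle here: once the two inputs are assembled the statement reduces to a short chain of inequalities. The only point demanding a little care is the final combinatorial step, where one must explicitly exclude the borderline value $\mathcal T(m) = 3$ using the parity of the totient rather than any geometric input. It is also worth noting that the argument uses \emph{only} the inclusion $S_{G_0}(X) \subset NS(X)$ from Lemma \ref{InvLattice} to produce $\rho(X) \ge 20$, so no projectivity assumption on $X$ is needed for this lemma; projectivity will instead enter later, when one actually classifies the transcendental lattice $T_X$ and the polarization $L$.
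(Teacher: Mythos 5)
Your proof is correct, but it closes the argument differently from the paper. The paper's route is geometric at the key step: since $m\neq 1$ the manifold carries a non-symplectic automorphism, hence is projective, so $NS(X)$ contains an ample class; as $S_{G_0}(X)$ is \emph{negative definite} (Lemma \ref{InvLattice}), that class cannot lie in $S_{G_0}(X)\otimes\Q$, giving the strict improvement $\rho(X)\ge \rk S_{G_0}(X)+1=21$ and hence $\mathcal T(m)\le 2$ directly. You instead settle for $\rho(X)\ge 20$, get only $\mathcal T(m)\le 3$, and then kill the borderline value with the arithmetic fact that the totient is even for all $m\ge 3$, so $\mathcal T(m)=3$ is impossible. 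Both arguments are valid and yield $m\in\{2,3,4,6\}$; yours trades the ample-class/projectivity input for an elementary parity observation, which is a genuine simplification at the level of this lemma. What the paper's version buys, however, is the stronger intermediate conclusion $\rho(X)=21$ and $\rk T(X)=2$, which is quoted and used repeatedly later (e.g.\ at the start of Subsection \ref{Section-Extensions}, and in the rank-2 classification via Lemma \ref{Admits} and Proposition \ref{Index2}); your proof of the lemma alone would leave $\rk T(X)=3$ open, so the projectivity argument you defer would still have to be made before the subsequent analysis. Also note that your remark about avoiding projectivity is only as strong as the bound \eqref{BoundM} itself, which the paper states without a projectivity hypothesis, so your use of it is legitimate.
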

\begin{proof}
By Lemma \ref{InvLattice} one has $ S_{G_0}(X) \subset NS(X)$ and since $m\neq 1$, then $NS(X)$ contains an ample class. Moreover, $S_{G_0}(X)$ is negative-definite and thus
\[ \rk  NS(X) \geq \rk  S_{G_0}(X) + 1.\]
Since maximal symplectic groups have $\rk S_{G_0}(X) = 20$ by \cite{HM}, then $\rho(X) \geq 21$ and by \eqref{BoundM} one has $ \mathcal T(m) \leq 2 $.
As a consequence, in the exact sequence \eqref{sequence} possible $m$'s are $m =2,3,4,6$.
\end{proof}
In the list of \cite{HM} the group with biggest order is $G_0:= \Z_3^4 : \mathcal A_6$, with $|G_0|= 29160$. Thus, let $G \subset \Aut(X)$ be a group whose symplectic part $G_0$ is maximal. Since we know that in the exact sequence \eqref{sequence} the bigger $m$ is 6 by Lemma \ref{ms}, then the order of $G$ is bounded by 
\begin{equation}
    \label{bound}
 |G| \leq 29160 \times 6 = 174960.\end{equation}
In fact we will show that there exists such a group $G$ for which \eqref{bound} is an equality.
\begin{rem}
We can observe that if the group $G_0$ is not maximal, then $m$ can be bigger than 6 but according to the list in \cite{HM}, the order of $G$ can not be bigger than the bound of \eqref{bound}.
\end{rem}

From now on, let $G_0=\Z_3^4 : \mathcal A_6$, with $|G_0|= 29160$. By \cite{HM}, the invariant lattice $S^{G_0}(X)$ is isometric to
\[ S^{G_0}(X) = \begin{pmatrix} 6 & 3 & 0 \\ 3 & 6 & 0 \\ 0 & 0 & 6 \end{pmatrix}.\]

We want to study the possible extensions $G$ of $G_0$, i.e. the groups acting on an IHS manifold such that their symplectic part is the group $G_0= \Z_3^4 : \mathcal A_6$. 

If $X=F(Y)$ is the Fano variety of lines of a cubic fourfold $Y$, an automorphism on $Y$ naturally induces an automorphism on $X=F(Y)$. Moreover, the following holds (see \cite[Lemma 1.2, Corollary 1.3]{Fu}).

\begin{lem}\label{AutInjective} 
     An automorphism $\psi$ of $F(Y)$ is induced by an automorphism of $Y$ if and only if $\psi^*(h) = h$, where $h$ is the polarization of $F(Y)$.
    The natural morphism 
    \[ \Aut(Y) \to \Aut(F(Y)) \]
    is injective and its image is denoted by $\Aut_h(F(Y))$ and consists of automorphisms of $F(Y)$ which fix the polarization $h$.
\end{lem}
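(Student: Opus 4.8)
The plan is to prove the two implications separately and to extract injectivity along the way. Write $V$ for the six‑dimensional vector space with $\PP(V)=\PP^5$, so that $\Gr(1,5)=\Gr(2,V)$ and the Plücker embedding realizes $F(Y)\subset\Gr(2,V)\hookrightarrow\PP(\wedge^2V)\cong\PP^{14}$, with $h$ the restriction of $\mathcal O(1)$. For the easy implication I would use the classical theorem of Matsumura--Monsky: every automorphism of the smooth cubic fourfold $Y$ is the restriction of some $g\in\PGL(V)$. Such a $g$ acts linearly on $\wedge^2V$, hence preserves the Plücker embedding and the class $\mathcal O(1)$, so the induced automorphism of $F(Y)$ fixes $h$; thus the image of $\Aut(Y)\to\Aut(F(Y))$ lands in $\Aut_h(F(Y))$. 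For injectivity, suppose $g\in\Aut(Y)$ induces the identity on $F(Y)$, i.e. $g(\ell)=\ell$ for every line $\ell\subset Y$. Through a general point $y\in Y$ passes a positive‑dimensional family of lines of $Y$ (the incidence variety $L\subset F(Y)\times Y$ has dimension $\dim F(Y)+1=5$ and dominates $Y$). Since $y\in\ell$ and $g(\ell)=\ell$ give $g(y)\in\ell$ for every line $\ell\subset Y$ through $y$, we get $g(y)\in\bigcap_{\ell\ni y}\ell=\{y\}$; hence $g=\mathrm{id}$ on a dense set and therefore $g=\mathrm{id}$.

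For the converse, assume $\psi^*h=h$. First I would establish linear normality: the restriction map $H^0(\Gr(2,V),\mathcal O(1))\to H^0(F(Y),h)$ is an isomorphism, so $H^0(F(Y),h)\cong\wedge^2V^\vee$. Because $h$ is very ample and is fixed by $\psi$, the automorphism $\psi$ acts on $H^0(F(Y),h)$ and therefore extends to a projective‑linear automorphism $\widetilde\psi\in\PGL(\wedge^2V)$ with $\widetilde\psi(F(Y))=F(Y)$. The crucial step is to show that $\widetilde\psi$ preserves the Grassmannian $\Gr(2,V)$, and I would do this by comparing the quadrics through the two varieties. Since $F(Y)$ is the zero locus in $\Gr(2,V)$ of a section of $\mathrm{Sym}^3\mathcal U^\vee$ (a cubic condition, where $\mathcal U$ is the tautological rank‑two bundle), a cohomology computation on the Grassmannian — the Koszul resolution of $\mathcal O_{F(Y)}$ together with Bott vanishing — shows that $F(Y)$ lies on no quadric other than the Plücker quadrics, i.e. $H^0(\PP^{14},\mathcal I_{F(Y)}(2))=H^0(\PP^{14},\mathcal I_{\Gr(2,V)}(2))$. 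As $\widetilde\psi$ preserves $F(Y)$ it preserves this space of quadrics, and hence preserves its base locus, which is exactly $\Gr(2,V)$.

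Once the Grassmannian is preserved, $\widetilde\psi$ restricts to an automorphism of $\Gr(2,6)$; since $2\neq 6-2$ there is no duality automorphism and $\Aut(\Gr(2,V))=\PGL(V)$, so $\widetilde\psi=[\wedge^2 g]$ for some $g\in\PGL(V)$. Finally $g$ preserves $Y$: the union $\bigcup_{[\ell]\in F(Y)}\ell$ of the lines parametrized by $F(Y)$ is all of $Y$ (the cubic fourfold is covered by its lines), and $g$ permutes these lines because it preserves $F(Y)$, whence $g(Y)=Y$ and $g\in\Aut(Y)$. By construction the automorphism of $F(Y)$ induced by $g$ is $\psi$, which proves that $\psi$ is induced from $Y$.

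I expect the main obstacle to be the step showing that $\widetilde\psi$ preserves $\Gr(2,V)$, which rests entirely on the cohomological fact that $F(Y)$ imposes no quadratic conditions beyond the Plücker ones; establishing the required vanishing on the Grassmannian is the technical heart of the argument. The remaining ingredients — linear normality, the identification $\Aut(\Gr(2,6))=\PGL(V)$, and the reconstruction $Y=\bigcup_{[\ell]\in F(Y)}\ell$ — are comparatively routine. (Alternatively, the preservation of the Grassmannian structure could be routed through the global Torelli theorem for cubic fourfolds, but the quadric comparison above is more elementary and self‑contained.)
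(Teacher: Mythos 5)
Your proposal is correct: every step checks out --- Matsumura--Monsky linearity of $\Aut(Y)$, injectivity via the positive-dimensional family of lines through a general point of $Y$, linear normality of $F(Y)\subset\PP^{14}$, the identification of the quadrics through $F(Y)$ with the Pl\"ucker quadrics (the Koszul resolution of the section of $\mathrm{Sym}^3\mathcal U^\vee$ plus Borel--Weil--Bott vanishing that you flag as the technical heart does go through), Chow's theorem giving $\Aut(\Gr(1,5))=\PGL_6$ since $2\neq 4$, and the reconstruction of $Y$ as the union of the lines parametrized by $F(Y)$. The paper itself gives no proof of this lemma --- it quotes it from \cite[Lemma 1.2, Corollary 1.3]{Fu} --- and your argument is essentially the same chain of reasoning as in that reference, so there is nothing further to compare.
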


In particular, let $Y$ be the Fermat cubic fourfold:
%\[ Y = V(x_0^3 + \dots + x_5^3) \]
    \[ Y= \{ (x_0:\dots:x_5) \in \PP^5 : x_0^3 + \dots + x_5^3 = 0 \}\subset \PP^5. \]

It is proven in \cite[Theorem 1.8]{CubicFourfolds} that $Y$ is the only cubic fourfold which admits the biggest possible automorphism group $\Aut(X) = \Z_3^5 : \mathcal S_6$.
Using Lemma \ref{AutInjective}, we can consider the Fano variety $F(Y)$ and show that $F(Y)$ admits the action of $\Z_3^5 : \mathcal S_6$. We observe that the order of $\Z_3^5 : \mathcal S_6$ is 174960, thus this group fits 
in the sequence
\begin{equation}
\xymatrix{
  1 \ar[r] & \Z_3^4 : \mathcal A_6 \ar[r] & G  \ar[r]^-{\alpha} & \mu_6  \ar[r] & 1 
}.
\end{equation}

Mongardi in \cite[Example 7.4.2]{Mon-thesis} proved that \[T_{F(Y)} \simeq \begin{pmatrix}
    6&3\\3&6
\end{pmatrix} \mbox{ and }\  S^{G_0}(F(Y)) \simeq \langle 6 \rangle \oplus \begin{pmatrix}
    6&3\\3&6
\end{pmatrix}.\]

\begin{rem}
 \label{unique} Let $Y$ be a cubic fourfold and let $G = \Z_3^5 : \mathcal S_6$ acting on the Fano variety of lines $F(Y)$ of $Y$ fixing the polarization $h$. Since the morphism
\[ \Aut(Y) \to \Aut_h(F(Y)) \]
is an isomorphism, we can consider the pre-image of $G \subset \Aut_h(F(Y))$ which acts on $Y$. 
Since the Fermat cubic fourfold is the only cubic fourfold which admits an action of $G$, $Y$ has to be the Fermat cubic fourfold, and thus $F(Y)$ is the Fano variety of lines of the Fermat cubic fourfold.
\end{rem}

\subsection{Extensions of $G_0 = \Z_3^4:\mathcal A_6$ by a non-symplectic automorphism}\label{Section-Extensions}
Let $X$ be an IHS manifold of K3$^{[2]}$-type and let $G \subset \Aut(X)$ such that $G_0 = \Z_3^4:\mathcal A_6$ and $X$ admits a non-symplectic automorphism of order $m$. We have the following exact sequence
\begin{equation}
\xymatrix{
  1 \ar[r] &G_0 \ar[r] & G  \ar[r] & \mu_m  \ar[r] & 1 
}.
\end{equation}
where $G/G_0 = \mu_m$ with $m \in \{2,3,4,6\}$. As we observed, $\rho(X) = 21$, $\rk T_X = 2$ and $T_X \subset S^{G_0}(X)$. 

Let $\{e,f,h\}$ be the basis such that
\[ S^{G_0}(X) = \begin{pmatrix} 6 & 3 & 0 \\ 3 & 6 & 0 \\ 0 & 0 & 6 \end{pmatrix}.\]
By \cite{HM}, we know that the isometry group of $S^{G_0}(X)$ is of order 24 and we have the following:

\begin{lem}
\label{Isometries} The isometry group $\mathcal O(S^{G_0}(X))$ of $S^{G_0}(X)$ does not contain any element of order 4.\end{lem}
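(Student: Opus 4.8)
The goal is to prove that the isometry group $\OO(S^{G_0}(X))$ of order $24$ contains no element of order $4$. We are given the Gram matrix
\[ S^{G_0}(X) = \begin{pmatrix} 6 & 3 & 0 \\ 3 & 6 & 0 \\ 0 & 0 & 6 \end{pmatrix} \]
with respect to the basis $\{e,f,h\}$, and we know $|\OO(S^{G_0}(X))| = 24$.

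My plan is to argue structurally rather than to enumerate all $24$ isometries. The key observation is that the lattice splits orthogonally: the vectors $e,f$ span a sublattice $A_2(3)$ (Gram matrix $\bigl(\begin{smallmatrix} 6 & 3 \\ 3 & 6 \end{smallmatrix}\bigr)$, which is $A_2$ rescaled by $3$) and $h$ spans the rank-one lattice $\langle 6 \rangle$, with $(e,h)=(f,h)=0$. So $S^{G_0}(X) \simeq A_2(3) \oplus \langle 6\rangle$. First I would record that any isometry must preserve the set of vectors of each given square-length. The shortest nonzero vectors have square $6$: the norm-$6$ vectors in $A_2(3)$ are exactly $\pm e, \pm f, \pm(e-f)$ (six of them, the minimal vectors of $A_2$ scaled), and $\pm h$ are the norm-$6$ vectors in the $\langle 6\rangle$ summand. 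The crucial arithmetic point is that $h$ is distinguished from $e,f,e-f$ by divisibility or by the gluing structure of the discriminant group, so no isometry can send $h$ into the $A_2(3)$ part; equivalently $\langle 6\rangle$ and $A_2(3)$ are non-isometric and the orthogonal splitting is canonical. Hence every isometry preserves the decomposition and acts as $\OO(A_2(3)) \times \OO(\langle 6\rangle)$.

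Now I would compute these two factors. We have $\OO(\langle 6\rangle) = \{\pm 1\} \cong \Z_2$, and $\OO(A_2(3)) \cong \OO(A_2) \cong \mathcal D_6$, the dihedral group of order $12$ (the symmetry group of the hexagon of minimal vectors). This gives $|\OO(S^{G_0}(X))| = 12 \times 2 = 24$, consistent with the stated order and confirming the splitting accounts for all isometries. An element of order $4$ in $\mathcal D_6 \times \Z_2$ would require an element of order $4$ in one of the factors (since $\mathrm{lcm}$ of the orders of the two components must be $4$, and $\Z_2$ contributes only orders $1,2$, so the $\mathcal D_6$ component would need order $4$). But $\mathcal D_6$ has elements only of orders $1,2,3,6$ — its cyclic rotation subgroup is $\Z_6$ and all reflections are involutions — so there is no element of order $4$. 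Therefore $\OO(S^{G_0}(X))$ has no element of order $4$.

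The main obstacle, and the step requiring the most care, is justifying that the orthogonal decomposition $A_2(3) \oplus \langle 6\rangle$ is preserved by every isometry, i.e. that no isometry mixes the two summands. Since both summands contain norm-$6$ vectors, this is not automatic from length considerations alone. I would settle it by a discriminant-form argument: the discriminant group $A_{S^{G_0}(X)}$ is $A_{A_2(3)} \oplus A_{\langle 6\rangle}$, and one checks that the images of $e,f,e-f$ and of $h$ in the discriminant group carry different $q_{S^{G_0}}$-values (or different orders / divisibilities), so that an isometry, which must respect $\div(v, S^{G_0}(X))$ and the induced action on $A_{S^{G_0}(X)}$, cannot carry $h$ to any minimal vector of $A_2(3)$. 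Once this separation is in place the rest is the elementary group theory of $\mathcal D_6 \times \Z_2$ described above.
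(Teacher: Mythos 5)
Your proof is correct, but it takes a genuinely different route from the paper's. The paper proves this lemma purely computationally: it exhibits generators $-Id$, $\rho_1$, $\rho_2$, $\rho_3$ of $\OO(S^{G_0}(X))$ and verifies with Sage that the group they span contains no element of order $4$. You argue structurally instead: the splitting $S^{G_0}(X)\simeq A_2(3)\oplus\langle 6\rangle$ is canonical, so $\OO(S^{G_0}(X))\cong\OO(A_2(3))\times\OO(\langle 6\rangle)\cong\mathcal{D}_6\times\Z_2$, whose elements have orders $1,2,3,6$ only. The one step you flag as delicate --- that no isometry can carry $\pm h$ to a minimal vector of the $A_2(3)$ summand --- is settled by the simplest of the invariants you propose, namely divisibility: from $(h,e)=(h,f)=0$ and $h^2=6$ one gets $\mathrm{div}(h,S^{G_0}(X))=6$, while $(e,f)=3$ gives $\mathrm{div}(e)=\mathrm{div}(f)=\mathrm{div}(e-f)=3$; since isometries preserve norm and divisibility, every isometry fixes the set $\{\pm h\}$, hence preserves $h^{\perp}=A_2(3)$, and the product decomposition of the isometry group follows (no discriminant-form computation is even needed). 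Your argument is computer-free, independently recovers the order $24$ quoted from H\"ohn--Mason, and gives strictly more than the lemma asks, namely the full group structure $\mathcal{D}_6\times\Z_2$ and the complete list of element orders. What it does not produce, and what the paper's computation does, are the explicit matrices $\rho_1,\rho_2,\rho_3$, which the paper reuses afterwards to compute the orbits of polarizations in the proof of Proposition \ref{Order 2 cases} (Table \ref{table_orbits}); so the paper's computational proof does double duty, while yours is the cleaner self-contained proof of the statement itself.
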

\begin{proof}
Denote by $\rho_1$, $\rho_2$ and $\rho_3$ 
the isometries of $S^{G_0}(X)$ defined by the following matrices:
\[  \rho_1 = \begin{pmatrix} 1 & 0 & 0 \\ 0 & 1 & 0 \\ 0 & 0 & -1 \end{pmatrix}, \ \ \,  \rho_2 = \begin{pmatrix} -1 & 1 & 0 \\ 0 & 1 & 0 \\ 0 & 0 & 1 \end{pmatrix} \ \ \ \textnormal{and} \ \ \  \rho_3 = \begin{pmatrix} 0 & -1 & 0 \\ 1 & -1 & 0 \\ 0 & 0 & -1 \end{pmatrix}.\]
A direct computation by Sage shows that the isometry group $\mathcal O(S^{G_0}(X))$ is spanned by $-Id$, $\rho_1$, $\rho_2$ and $\rho_3$ and it 
does not contain any element of order 4.\end{proof}

This allows to show that in the previous setting, $X$ does not admit a non-symplectic automorphism of order 4 and thus the value of $m$ in \eqref{sequence} can not be 4.

\begin{prop}
Let $X$ be an IHS manifold of K3$^{[2]}$-type and let $G \subset \Aut(X)$ such that $G_0 = \Z_3^4:\mathcal A_6$ and $X$ admits a non-symplectic automorphism of order $m$. Then $m \in \{2,3,6\}$.
\end{prop}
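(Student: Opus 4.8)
The plan is to rule out $m=4$ by combining the earlier bound on $m$ (which already permits $m\in\{2,3,4,6\}$) with the structure of the non-symplectic action on the invariant lattice. The key observation is that the quotient $G/G_0\cong\mu_m$ acts on $X$, and since $G_0$ acts trivially on the transcendental lattice $T(X)$ (Lemma \ref{ActionTX}), a generator $g$ of the non-symplectic part descends to an isometry of the invariant lattice $S^{G_0}(X)$. More precisely, the action of $G$ on $H^2(X,\Z)$ preserves the decomposition into $G_0$-invariant and $G_0$-co-invariant parts, so $g$ restricts to an isometry of $S^{G_0}(X)$. The plan is to show that if $m=4$ then $g$ would induce an isometry of order $4$ on $S^{G_0}(X)$, contradicting Lemma \ref{Isometries}.

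First I would argue that a non-symplectic automorphism of order $m$ must act with an order-$m$ action on $S^{G_0}(X)$, or at least with an action whose order is divisible by the relevant primitive $m$-th root behavior on $T(X)$. The crucial point is that $T(X)\otimes\Q$ carries a faithful representation of $\mu_m$ by Lemma \ref{ActionTX}(1), since only elements of $G_0$ act trivially on $T(X)$; hence the image of $g$ in $\OO(T(X))$ has exact order $m=4$. Because $T(X)$ is a primitive sublattice of $S^{G_0}(X)$ (we have $\rk T(X)=2$ and $\rk S^{G_0}(X)=3$), and $g$ preserves $S^{G_0}(X)$ and $T(X)$ simultaneously, the induced isometry $g|_{S^{G_0}(X)}\in\OO(S^{G_0}(X))$ must have order divisible by $4$.

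Then I would invoke Lemma \ref{Isometries}: since $\OO(S^{G_0}(X))$ has order $24$ and contains no element of order $4$, there can be no isometry of $S^{G_0}(X)$ of order $4$. This directly contradicts the existence of $g|_{S^{G_0}(X)}$ of order divisible by $4$, so $m=4$ is impossible. Combining this with the already-established bound $m\in\{2,3,4,6\}$ from Lemma \ref{ms} (equivalently from \eqref{BoundM}), we conclude $m\in\{2,3,6\}$, which is the statement.

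The step I expect to be the main obstacle is making rigorous the claim that $g$ restricts to a genuine order-$4$ isometry of $S^{G_0}(X)$ rather than merely acting with order $4$ on some larger space. One must be careful that $g$ preserves the invariant lattice $S^{G_0}(X)$ as a set; this follows because $g$ normalizes $G_0$ (as $G_0\trianglelefteq G$), so it permutes the $G_0$-fixed vectors and hence stabilizes $S^{G_0}(X)$. Given that $T(X)\subset S^{G_0}(X)$ and $g$ acts on $T(X)$ with exact order $4$, the restriction $g|_{S^{G_0}(X)}$ cannot have order $1,2$, or $3$, so its order is a multiple of $4$; any such element would generate (or contain in the cyclic group it generates) an element of order $4$ in $\OO(S^{G_0}(X))$, which is excluded. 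The only subtlety is ensuring the order-$4$ action on the rank-$2$ lattice $T(X)$ really forces order divisible by $4$ on the rank-$3$ lattice, which is immediate since the restriction map $\OO(S^{G_0}(X))\to\OO(T(X))$ cannot decrease the order of $g$ below its order on $T(X)$.
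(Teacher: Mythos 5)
Your proof is correct and takes essentially the same approach as the paper: both reduce to $m\in\{2,3,4,6\}$ via Lemma \ref{ms} and then exclude $m=4$ by observing that $G/G_0\simeq\mu_m$ acts faithfully on $S^{G_0}(X)$ while $\OO(S^{G_0}(X))$ contains no element of order $4$ (Lemma \ref{Isometries}). The only difference is that you make explicit what the paper leaves implicit, namely that $g$ stabilizes $S^{G_0}(X)$ (since $G_0\trianglelefteq G$) and that faithfulness follows from Lemma \ref{ActionTX} together with $T(X)\subset S^{G_0}(X)$.
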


\begin{proof}
    By Lemma \ref{ms} $m \in \{2,3,4,6\}$. To discard the case $m=4$, we notice that the group $G/G_0 \simeq \mu_m$ acts on $S^{G_0}(X)$ faithfully, but $\OO(S^{G_0}(X))$ does not contain an element of order 4 by Lemma \ref{Isometries}.
\end{proof}

\begin{rem}  \label{ord6}
 We will study the cases $m=2$ and $m=3$ and show that the same IHS manifold $X$ admits the action of both non-symplectic automorphisms. Since 2 and 3 are coprimes, this would imply that $X$ also admits a non-symplectic automorphism of order 6.
\end{rem}

\begin{rem}\label{ValuesLattice} Let $\{e,f,h\}$ be the standard basis of $S^{G_0}(X)$ and let $L \in S^{G_0}(X)$. Then if $L = \lambda e + \mu f + \delta h$, with $\lambda,\mu,\nu\in\Z$, $L^2$ has the following form:
\[L^2= \left(\begin{array}{ccc}
\lambda & \mu & \delta \\
\end{array}\right)\,
\left(\begin{array}{ccc}
6 & 3 & 0 \\
3 & 6 & 0 \\
0 & 0 & 6 \\
\end{array}\right)\,
\left(\begin{array}{c}
\lambda \\ \mu \\ \delta \\
\end{array}\right)=6\left(\lambda^2+\lambda\mu+\mu^2+\delta^2\right). \]
Thus $L^2\in 6\mathbb{Z}$. Since $\lambda^2 + \lambda\mu + \mu^2 \geq 0$ and thus $L^2 > 0$, we can assume that $L^2=6n$ with $n$ a positive integer.
In the same way $(L, M) \in 3\Z$ for $L,M \in S^{G_0}(X)$.
\end{rem}

From now on we will assume that $G = \langle i\rangle G_0$, with $i$ a non-symplectic automorphism normalizing $G_0$ and such that $i^m \in G_0$, with $m \in \{2,3\}$. The group $G/G_0 = \langle i\rangle$ acts on $S^{G_0}(X)$ fixing a polarization $L$ with $L^2 = 6n$ and acting as an order $m$ isometry on $T_X$. We observe that the polarization $L$ is primitive and that $T_X = L^\perp \cap S^{G_0}(X)$. Furthermore, $L \oplus T_X$ is a sublattice of $S^{G_0}(X)$ of the same rank (and finite index).

\begin{prop}\label{Index2} Let $L$ be an element in $S^{G}(X)$ and let $L^2 = 6n$. Then, $L \oplus T_X = \langle 6n \rangle \oplus T_X$ is a sublattice of $S^{G_0}(X)$ of index $m$ except in the case that $n = 1$, $L = \pm h$ in the standard basis, 
and 
\[ T_X = \begin{pmatrix} 6 & 3 \\ 3 & 6 \end{pmatrix},\]
for which we have that $S^{G_0}(X) = L \oplus T_X = \langle 6 \rangle \oplus \begin{pmatrix}
    6&3\\3&6
\end{pmatrix}$.
\end{prop}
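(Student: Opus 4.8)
The plan is to compute the index $k := [S^{G_0}(X) : L \oplus T_X]$ by identifying $L$ and $T_X$ with the invariant and co-invariant lattices of the cyclic action of $\langle i\rangle$ on $S^{G_0}(X)$, and then to extract two independent constraints on the resulting glue group: that it is $m$-torsion, and that it is cyclic. First I would observe that, since $i$ fixes $L$ and by Lemma \ref{ActionTX} acts on $T_X \otimes \Q$ without nonzero fixed vectors (for $m=2$ it is $-\mathrm{id}$, for $m=3$ a fixed-point-free order-$3$ rotation), the $\langle i\rangle$-invariant lattice of $S^{G_0}(X)$ is exactly $\Z L$ (here I use that $L$ is primitive), and the co-invariant lattice is exactly $T_X = L^\perp$. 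Applying Lemma \ref{LemmaForIndex}(3) to $\langle i\rangle$, which has order $m$, the quotient $S^{G_0}(X)/(L \oplus T_X)$ is therefore $m$-torsion.

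Next I would feed this into the overlattice dictionary. By Lemma \ref{Overlattices} the overlattice $L \oplus T_X \subset S^{G_0}(X)$ corresponds to an isotropic subgroup $H \subset A_L \oplus A_{T_X}$ with $|H| = k$, and since $L$ is primitive the projection $H \to A_L$ is injective. As $A_L = A_{\langle 6n\rangle} \cong \Z/6n\Z$ is cyclic, $H$ is cyclic; being also $m$-torsion with $m$ prime, this forces $|H| \in \{1, m\}$, hence $k \in \{1, m\}$. I expect this dichotomy to be the main obstacle: everything hinges on simultaneously controlling the torsion (from the group action) and the cyclicity (from primitivity together with the cyclic discriminant of a rank-one lattice).

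It then remains to decide exactly when $k = 1$, i.e. when $S^{G_0}(X) = L \oplus T_X$ splits orthogonally. Comparing determinants, $d(L)\,d(T_X) = k^2\, d(S^{G_0}(X))$, and with $d(S^{G_0}(X)) = 162$ and $d(L) = 6n$ this reads $k^2 = n\,d(T_X)/27$; in particular $k = 1$ forces $n\,d(T_X) = 27$. The key number-theoretic input is that, by Remark \ref{ValuesLattice}, any basis of $T_X$ has squares in $6\Z$ and pairings in $3\Z$, so $d(T_X) = 9(4\alpha\beta - \gamma^2)$ for integers $\alpha,\beta,\gamma$; reducing modulo $4$ shows $4\alpha\beta - \gamma^2 \not\equiv 1,2 \pmod 4$, whence $d(T_X) \ge 27$. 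Combined with $n \ge 1$ this pins down $n = 1$ and $d(T_X) = 27$, and a short reduction-theory argument on $4\alpha\beta - \gamma^2 = 3$ identifies $T_X \cong A_2(3)$.

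Finally I would match this back to an explicit polarization. With $n = 1$ one has $\lambda^2 + \lambda\mu + \mu^2 + \delta^2 = 1$, a short finite list of primitive vectors. Here a clean shortcut is available: analyzing the surjection $v \mapsto (v,L)$ on $S^{G_0}(X)$, whose kernel is $T_X$ and whose image is $\mathrm{div}(L,S^{G_0}(X))\,\Z$, yields the identity $k = L^2/\mathrm{div}(L,S^{G_0}(X))$. A direct divisibility computation then shows that $d(T_X) = 27$ (equivalently $k=1$) holds precisely for $L = \pm h$, and that $h^\perp = \langle e, f\rangle$ has Gram matrix $\begin{pmatrix} 6 & 3 \\ 3 & 6 \end{pmatrix} \cong A_2(3)$, with the block-diagonal structure of $S^{G_0}(X)$ giving the orthogonal splitting $\langle 6 \rangle \oplus A_2(3)$ directly. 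In every other case $k = m$, which is the assertion.
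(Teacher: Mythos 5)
Your proof is correct and follows essentially the same route as the paper's: the dichotomy $[S^{G_0}(X):L\oplus T_X]\in\{1,m\}$ comes from combining the $m$-torsion statement of Lemma \ref{LemmaForIndex}(3) with the embedding of the quotient group into the cyclic group $A_L$, and the exceptional case is isolated by the same determinant comparison together with the mod-$4$ obstruction on $4ac-b^2$. The only (minor) divergence is your closing identity $k=L^2/\mathrm{div}(L,S^{G_0}(X))$ used to single out $L=\pm h$, where the paper instead reads this off from $\det T_X=27$ and the reduced form of $T_X$; both work.
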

\begin{proof}
Let $L$ be an element in $S^{G}(X)$ with $L^2 = 6n$. We want to find for which values of $n$ we have that $L \oplus T_X$ is equal to $S^{G_0}(X)$ and for which values of $n$ it is a sublattice of $S^{G_0}(X)$. By Remark \ref{ValuesLattice}, $L = \langle6n\rangle$ for some $n \geq 0$ and 
\[ T_X = \begin{pmatrix} 6a & 3b  \\ 3b & 6c \end{pmatrix},\]
with $-a < b \leq a \leq c$ and $b \geq 0$ if $a=c$. In particular, $27b^2 \leq \disc(T_X) = 36ac-9b^2$. Thus, if $ S^{G_0}(X) = L \oplus T_X$, then
\[ 162 = \disc(S^{G_0}(X)) = \disc(L)\disc(T_X) = 6n(36ac-9b^2) = 54n(4ac-b^2).\] This implies $3 = n(4ac-b^2) $
and thus the only possible values of $n$ are $1$ or $3$. 
The case $n = 3$ is not admissible since this would imply $4ac-b^2=1$ but there is no $b\in\Z$ such that $b^2\equiv 3 \mod 4$. 
If $n=1$, then $\det(T_X) = 27$ and the unique possibility is that $a=b=c=1$ and $L = \pm h$. We conclude that for all other $n$ and $T_X$ the lattice $\langle 6n \rangle \oplus T_X$ is a sublattice of $S^{G_0}(X)$. 

To compute the index we follow \cite[Lemma 2.9]{M20extension}. We are assuming that $L \oplus T_X$ is different from $S^{G_0}(X)$. Since $L$ is primitive in $S^{G_0}(X)$, 
the projection
\[ S^{G_0}(X)/(L \oplus T_X) \to A_L \]
is an embedding. This shows that $S^{G_0}(X)(L \oplus T_X)$ is cyclic, and by Lemma \ref{LemmaForIndex} it is $m$-torsion with $m$ prime. Thus, $L \oplus T_X$ has index $m$ in $S^{G_0}(X)$.
\end{proof}

Thanks to the following result we can characterize the form of the matrix of the transcendental lattice depending if it admits an order 3 isometry.
\begin{lem}[{\cite[Theorem 51a]{QuadraticForms}}]\label{Admits} Let $T$ be a positive definite rank 2 even lattice. Then $T$ admits an isometry of order 3 if and only if there exists $a\in\Z_{>0}$ such that $T$ has the form
\[ T_X = \begin{pmatrix} 2a & a  \\ a & 2a \end{pmatrix},\]
\end{lem}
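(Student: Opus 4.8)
The plan is to prove the two implications separately, with the forward (``if'') direction being a short explicit verification and the converse being the substance of the statement.

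For the ``if'' direction I would simply exhibit an order-$3$ isometry of the lattice with Gram matrix $G=\begin{pmatrix} 2a & a \\ a & 2a\end{pmatrix}$ in the given basis. Taking the integral matrix $N=\begin{pmatrix} -1 & -1 \\ 1 & 0\end{pmatrix}$, a direct check gives $N^{t}GN=G$, while its characteristic polynomial $x^{2}+x+1$ shows that $N^{2}+N+\mathrm{Id}=0$, hence $N^{3}=\mathrm{Id}$ and $N\neq\mathrm{Id}$. Since $N$ does not depend on $a$, this works uniformly for every $a\in\Z_{>0}$, so $A_2(a)$ always carries an isometry of order $3$.

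For the converse, suppose $\phi\in\OO(T)$ has order $3$. The first step is to determine its characteristic polynomial. Being of finite order, $\phi$ is diagonalizable over $\C$ and its eigenvalues are cube roots of unity; as $\phi$ is a rank $2$ integral matrix different from the identity, its eigenvalues cannot include $1$ (that would force $\phi=\mathrm{Id}$), and a single primitive cube root cannot occur without its complex conjugate. Hence the eigenvalues are the two primitive cube roots $\zeta,\bar\zeta$, the characteristic polynomial is $x^{2}+x+1$, and therefore $\tr(\phi)=-1$, $\det(\phi)=1$, and $\phi^{2}+\phi+\mathrm{Id}=0$. This relation makes $T$ a module over the ring of Eisenstein integers $\Z[\zeta]\cong\Z[x]/(x^{2}+x+1)$, with $\zeta$ acting as $\phi$. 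Since $\Z[\zeta]$ is a principal ideal domain and $T$ is torsion-free of $\Z$-rank $2=\rank_{\Z}\Z[\zeta]$, the module $T$ is free of rank one, so $T=\Z[\zeta]\cdot v$ for some $v$; thus $\{v,\phi(v)\}$ is a genuine $\Z$-basis of $T$. In this basis the Gram matrix is $\begin{pmatrix} v^{2} & b \\ b & v^{2}\end{pmatrix}$ with $b=(v,\phi(v))$, since $\phi$ is an isometry and so $\phi(v)^{2}=v^{2}$. Writing $v^{2}=2a$ (even and positive, whence $a\in\Z_{>0}$), it remains to identify $b$: using $\phi^{2}(v)=-v-\phi(v)$ and the isometry property, $(\phi(v),\phi^{2}(v))=(v,\phi(v))=b$ on one hand and $(\phi(v),\phi^{2}(v))=(\phi(v),-v-\phi(v))=-b-2a$ on the other, so $b=-a$. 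Hence $T$ has Gram matrix $\begin{pmatrix} 2a & -a \\ -a & 2a\end{pmatrix}$, and replacing $\phi(v)$ by $-\phi(v)$ produces the basis realizing $\begin{pmatrix} 2a & a \\ a & 2a\end{pmatrix}$, i.e.\ $T\cong A_2(a)$.

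The step I expect to require the most care is the passage from ``$\{v,\phi(v)\}$ spans a finite-index sublattice'' to ``$\{v,\phi(v)\}$ is a basis of all of $T$''. This is precisely what the principal-ideal-domain structure of $\Z[\zeta]$ (class number one) provides, and it is what allows the final Gram-matrix computation to characterize $T$ up to isometry rather than merely up to finite index. Everything else reduces to the trace/determinant computation for $\phi$ and the one-line evaluation of $b$.
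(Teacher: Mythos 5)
Your proof is correct, but there is nothing in the paper to compare it against: the paper does not prove this lemma at all, it imports it as a known result, citing Theorem 51a of its quadratic-forms reference. So your argument is a genuinely self-contained substitute for a black-box citation. Both halves check out: in the forward direction, $N=\begin{pmatrix} -1 & -1 \\ 1 & 0\end{pmatrix}$ indeed satisfies $N^{t}GN=G$ and has characteristic polynomial $x^{2}+x+1$, giving an order-$3$ isometry of $A_2(a)$ for every $a$; in the converse, finite order plus integrality of $\phi$ forces characteristic polynomial $x^{2}+x+1$, Cayley--Hamilton turns $T$ into a torsion-free module over the Eisenstein integers $\Z[\zeta]$, and since $\Z[\zeta]$ is a PID the module is free of rank one, so $\{v,\phi(v)\}$ is an honest $\Z$-basis; the Gram computation $b=-a$ and the sign change $\phi(v)\mapsto-\phi(v)$ then land exactly on the stated matrix. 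Two small remarks. First, your exclusion of the eigenvalue $1$ is phrased in the wrong logical order: eigenvalue $1$ does not by itself force $\phi=\mathrm{Id}$; one must first use that non-real eigenvalues come in conjugate pairs to rule out $\{1,\zeta^{\pm1}\}$, and only then does $\{1,1\}$ together with diagonalizability give $\phi=\mathrm{Id}$ --- both ingredients appear in your sentence, merely reversed. Second, the classical proof behind the paper's citation would instead run through reduction theory of binary quadratic forms (an order-$3$ automorphism forces the reduced form to be $a(x^{2}+xy+y^{2})$); your PID argument avoids reduction theory entirely, produces a basis adapted to the isometry, and generalizes readily to isometries whose order has higher Euler totient, at the cost of invoking that $\Z[\zeta]$ has class number one.
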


\subsubsection{Extensions by an order 2 non-symplectic automorphism}
We start by classifying the possible transcendental lattices of an IHS manifold of K3$^{[2]}$-type together with a polarization such that it admits the symplectic action of $G_0$ and a non-symplectic involution. In other words, in the exact sequence \eqref{sequence} the group $G$ is spanned by $G_0$ and $\mu_2$.

\begin{prop}\label{PossibleEmbedding} Assuming that $X$ admits a non-symplectic involution, then the only values of $n$ such that there is an embedding of $\langle 6n \rangle \oplus T_X$ in $S^{G_0}(X)$ are $n=1,3,4$ . 
\end{prop}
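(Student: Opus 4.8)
The plan is to reduce the statement to a short divisibility computation that combines the prescribed shape of $T_X$ with the index bound already established. First I would record the two structural inputs. By Remark \ref{ValuesLattice} every Gram entry of a sublattice of $S^{G_0}(X)$ lies in the prescribed residue classes, so the transcendental lattice can be written in reduced form as $T_X = \left(\begin{smallmatrix} 6a & 3b \\ 3b & 6c \end{smallmatrix}\right)$ with $a,b,c \in \Z$, whence $\disc(T_X) = 36ac - 9b^2 = 9(4ac - b^2)$. The elementary observation driving everything is that $4ac - b^2 \equiv 0$ or $3 \pmod{4}$, since $b^2 \equiv 0$ or $1 \pmod{4}$; in particular the positive quantity $4ac - b^2$ can never equal $1$, $2$, or $6$, so it is always $\ge 3$.

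Next I would invoke Proposition \ref{Index2}: the index $k := [S^{G_0}(X) : \langle 6n \rangle \oplus T_X]$ equals $1$ only in the exceptional configuration ($n=1$, $T_X = A_2(3)$) and equals $m=2$ in every other case. Comparing discriminants through $\disc(\langle 6n \rangle \oplus T_X) = k^2\,\disc(S^{G_0}(X))$, and inserting $\disc(S^{G_0}(X)) = 162$ together with $\disc(\langle 6n \rangle \oplus T_X) = 6n \cdot 9(4ac - b^2)$, collapses everything to the single relation
\[ n\,(4ac - b^2) = 3k^2. \]
For $k=1$ this reads $n(4ac - b^2) = 3$, and since $4ac - b^2 \ge 3$ the only solution is $n = 1$ with $4ac - b^2 = 3$, i.e. $T_X = A_2(3)$. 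For $k=2$ it reads $n(4ac - b^2) = 12$, so $4ac - b^2 = 12/n$ must be an integer congruent to $0$ or $3 \pmod{4}$; running over the divisors of $12$ leaves precisely $12/n \in \{12, 4, 3\}$, that is $n \in \{1,3,4\}$, while $n = 2, 6, 12$ are discarded because $6, 2, 1 \not\equiv 0, 3 \pmod{4}$. Intersecting the two cases gives $n \in \{1,3,4\}$.

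To complete the equivalence I would then check sufficiency by exhibiting, for each $n \in \{1,3,4\}$, a primitive $L \in S^{G_0}(X)$ with $L^2 = 6n$ fixed by the involution, together with its orthogonal complement $T_X = L^{\perp} \cap S^{G_0}(X)$, and verifying that the resulting sublattice has the index predicted by the relation above; the case $n=1$, $L = h$, $T_X = A_2(3)$ is the Fermat/Fano configuration already identified. I expect the one genuinely delicate point to be the congruence obstruction, which is exactly what separates the surviving values $n = 1,3,4$ from the spurious divisors $n = 2, 6$ of $12$ that a bare discriminant count would otherwise admit: the content is that no lattice of the form $\left(\begin{smallmatrix} 6a & 3b \\ 3b & 6c \end{smallmatrix}\right)$ can have discriminant $9\cdot 6$ or $9\cdot 2$, and pinning down this residue class of $4ac - b^2$ is the crux of the argument.
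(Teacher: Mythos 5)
Your proof is correct and follows essentially the same route as the paper: both reduce the statement to the index--discriminant relation coming from Proposition \ref{Index2} (the paper cites \cite[Lemma I.2.1]{BarthHulek}), arrive at $n(4ac-b^2)=12$ in the index-$2$ case, and eliminate $n=2,6,12$ by the observation that $4ac-b^2\equiv 0$ or $3 \pmod 4$. The only cosmetic differences are that you package the index-$1$ and index-$2$ cases uniformly as $n(4ac-b^2)=3k^2$ (the paper simply notes $n=1$ occurs since $S^{G_0}(X)=\langle 6\rangle\oplus A_2(3)$), and your closing sufficiency check is not needed for this proposition --- existence of the surviving cases is handled separately in Proposition \ref{Order 2 cases} and the Existence subsection.
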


\begin{proof}
Since $S^{G_0} = \langle 6 \rangle \oplus \begin{pmatrix}
    6&3\\3&6
\end{pmatrix}$, the case $n = 1$ always happens. Let us consider the case that $L^2 = 6n$ and $L \oplus T_X$ is a sublattice of index 2 (by Proposition \ref{Index2}) in $S^{G_0}(X)$. Then, by \cite[Lemma I.2.1]{BarthHulek} we have that:
\[ 4 = [S^{G_0}(X):\langle6n\rangle\oplus T_X]^2 = \frac{\det(\langle6n\rangle \oplus T_X)}{\det(S^{G_0}(X))} = \frac{54n(4ac-b^2)}{162}. \] This implies $12 = n(4ac-b^2)$
and thus $n \in \{1,2,3,4,6,12\}$. Notice that the equation
\begin{equation}\label{Relation}
    \frac{12}{n} = 4ac-b^2 
\end{equation}
does not admit integers solution for $n = 2,\, 6$ and $12$. Thus $n \in \{1,3,4\}$. 
\end{proof}
 
Since the transcendental lattice $T_X$ has the form
\[ T_X = \begin{pmatrix} 6a & 3b  \\ 3b & 6c \end{pmatrix},\]
by Equation \eqref{Relation} one has
\begin{equation}\label{Det}
    \det(T_X) = 9(4ac-b^2) = 9\frac{12}{n}.
\end{equation}

\begin{prop}\label{Order 2 cases} Let $X$ be an IHS manifold of K3$^{[2]}$-type. Assume that $G_0 = \Z_3^4 : \mathcal A_6$ acts symplectically and faithfully on $X$ and assume that $X$ also admits a purely non-symplectic automorphism $\sigma$ of order $m=2$ acting on it, normalizing $G_0$ and such that $\sigma^2 \in G_0$. Let $G = \langle \sigma \rangle G_0$. Then, up to embedding and isometry of the polarization, we have the following possibilities:
\begin{enumerate}
    \item $L = h$, $L^2 = 6$, ${\rm div}(L) = 2$ and $T_X =  \begin{pmatrix} 6 & 3 \\ 3 & 6 \end{pmatrix}$.
    \item $L = e$, $L^2 = 6$, ${\rm div}(L) = 1$ and $T_X =  \begin{pmatrix} 6 & 0 \\ 0 & 18 \end{pmatrix}$.
    \item $L = e-f$, $L^2 = 6$, ${\rm div}(L) = 1$ and $T_X =  \begin{pmatrix} 6 & 0 \\ 0 & 18 \end{pmatrix}$.
    \item $L = e+f$, $L^2 = 18$ and $T_X =  \begin{pmatrix} 6 & 0 \\ 0 & 6 \end{pmatrix}$.
    \item $L = 2e-f$, $L^2 = 18$ and $T_X =  \begin{pmatrix} 6 & 0 \\ 0 & 6 \end{pmatrix}$.
\end{enumerate}
where $\{e,f,h\}$ is the standard basis of the invariant lattice $S^{G_0}(X)$.
\end{prop}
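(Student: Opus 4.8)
The plan is to work entirely inside the rank-$3$ invariant lattice $S^{G_0}(X)$ and reduce the statement to an enumeration of primitive vectors. Since $\sigma$ normalizes $G_0$ it preserves $S^{G_0}(X)$, and because $m=2$, Lemma~\ref{ActionTX} forces $\sigma$ to act as $-\mathrm{Id}$ on $T_X\otimes\Q$. Combined with $\sigma(L)=L$ and $T_X=L^{\perp}\cap S^{G_0}(X)$, this determines $\sigma$ on $S^{G_0}(X)\otimes\Q$ as the identity on $\langle L\rangle$ and minus the identity on $L^{\perp}$, i.e.
\[ \sigma(v)=\frac{2(v,L)}{L^2}\,L-v . \]
Thus $\sigma$ is forced to be this $\pm$-reflection, and the only freedom is in the choice of the primitive vector $L$. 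For $\sigma$ to be an honest integral isometry, equivalently for $\langle L\rangle\oplus T_X$ to have index dividing $m=2$ in $S^{G_0}(X)$ as required by Proposition~\ref{Index2} (through Lemma~\ref{LemmaForIndex}), the index $[S^{G_0}(X):\langle L\rangle\oplus T_X]=L^2/\mathrm{div}(L,S^{G_0}(X))$ must equal $1$ or $2$. This divisibility condition is the engine of the argument.

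Next I would combine this with Proposition~\ref{PossibleEmbedding}, which already restricts $n$ to $\{1,3,4\}$, and dispose of $n=4$. Writing $L=\lambda e+\mu f+\delta h$, the equation $L^2=24$ becomes $\lambda^2+\lambda\mu+\mu^2+\delta^2=4$; its only primitive solutions have $\delta=\pm1$ (those with $\delta=0$ or $\delta=\pm2$ are imprimitive), and for all of them $\mathrm{div}(L,S^{G_0}(X))=3$. Hence the index would be $24/3=8$, contradicting Proposition~\ref{Index2}, so $n=4$ cannot occur and we are left with $n\in\{1,3\}$.

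It then remains to enumerate the admissible polarizations and read off $T_X=L^{\perp}$. For $n=1$ the primitive vectors of norm $6$ are $\pm h$, $\pm e$, $\pm f$ and $\pm(e-f)$. The vector $\pm h$ has divisibility $6$ and index $1$, giving $T_X=\langle e,f\rangle=\begin{pmatrix}6&3\\3&6\end{pmatrix}$ (case (1)), while $\pm e$, $\pm f$, $\pm(e-f)$ have divisibility $3$ and index $2$, with $L^{\perp}\cong\begin{pmatrix}6&0\\0&18\end{pmatrix}$ (cases (2)--(3)). For $n=3$ the primitive vectors of norm $18$ are $\pm(e+f)$, $\pm(2e-f)$ and $\pm(e-2f)$; all have divisibility $9$ and index $2$, and in each case $L^{\perp}\cong\begin{pmatrix}6&0\\0&6\end{pmatrix}$ (cases (4)--(5)). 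Selecting representatives up to the isometries of $S^{G_0}(X)$ induced by automorphisms of $X$ normalizing $G_0$ then yields precisely the five listed pairs $(L,T_X)$.

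The main obstacle is twofold. First, one must carefully justify the rigidity pinning $\sigma$ down to the explicit $\pm$-reflection: this relies on Lemma~\ref{ActionTX} to obtain $\sigma|_{T_X}=-\mathrm{Id}$ and on $\langle L\rangle\oplus T_X$ having finite index, so that the divisibility test becomes genuinely equivalent to the existence of $\sigma$. Second, and more delicate, is the bookkeeping of the equivalence relation: under the full group $\OO(S^{G_0}(X))$ the norm-$6$ vectors $e$ and $e-f$ lie in a single orbit, as do the norm-$18$ vectors $e+f$ and $2e-f$, so keeping cases (2)--(3) and (4)--(5) distinct requires the finer equivalence coming only from isometries realized geometrically, together with a verification that the enumeration of primitive vectors of each relevant norm is complete.
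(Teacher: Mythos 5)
Your proposal is correct, and its skeleton coincides with the paper's: restrict $n$ to $\{1,3,4\}$ via Proposition \ref{PossibleEmbedding}, enumerate primitive solutions of $\lambda^2+\lambda\mu+\mu^2+\delta^2=n$, and read off $T_X=L^{\perp}\cap S^{G_0}(X)$, obtaining exactly the listed pairs. The genuine difference is the engine. You first pin $\sigma|_{S^{G_0}(X)}$ down as the $\pm$-reflection $v\mapsto \frac{2(v,L)}{L^2}L-v$, so that existence of $\sigma$ forces integrality of this map, i.e. $[S^{G_0}(X):\langle L\rangle\oplus T_X]=L^2/\mathrm{div}(L,S^{G_0}(X))\in\{1,2\}$; in particular $L^2=24$ is excluded because every primitive norm-$24$ vector has divisibility $3$, giving index $8$. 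The paper never writes $\sigma$ down: it invokes Proposition \ref{Index2} (index $=m$, via Lemma \ref{LemmaForIndex}) to get the determinant constraint $\det(T_X)=9\cdot 12/n$ of Equation \eqref{Det}, and kills $L^2=24$ by computing $T_X\cong\langle 6\rangle\oplus\langle 72\rangle$ for the two primitive orbit representatives and noting that the determinant is not $27$. The two exclusions are equivalent (determinant mismatch is exactly index mismatch), but your divisibility test is more direct and handles all twelve primitive norm-$24$ vectors at once. Two remarks. First, do carry out the verification you defer: the paper proves that $\lambda^2+\lambda\mu+\mu^2=2$ has no integer solutions (in its $L^2=18$ case), and this is genuinely needed for your claim that the primitive vectors of norms $18$ and $24$ are only the ones you list; also note that at this stage only the implication ``$\sigma$ exists $\Rightarrow$ index $\leq 2$'' is available (your word ``equivalent'' overreaches: sufficiency is the content of the later existence theorem). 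Second, your closing caution about orbits is not a defect of your argument but an actual correction to the paper: with the generators of Lemma \ref{Isometries}, $-\rho_2$ sends $e\mapsto e-f$ and $\rho_3$ sends $e+f\mapsto e-2f$, which Table \ref{table_orbits} itself groups with $2e-f$; hence $\{e,f,e-f\}$ and $\{e+f,2e-f,e-2f\}$ each form a single $\OO(S^{G_0}(X))$-orbit up to sign, the table's classes are finer than the true orbits, and cases (2)--(3), respectively (4)--(5), are isometric pairs $(L,T_X)$. This redundancy does not invalidate the proposition as a list of possibilities, and your version of the classification is in fact the sharper one.
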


\begin{proof}%[Proof of Proposition \ref{Order 2 cases}:] 
Using Proposition \ref{PossibleEmbedding}, we study the possible embeddings of $L$ with $L^2 = 6n$, for $n = 1$, $3$ and $4$. For each case we look for the integers $\lambda,\mu,\delta$ such that $L=\lambda e+\mu f+\delta h$ satisfies
\begin{equation} \label{6n} 
L^2=6\left(\lambda^2+\lambda\mu+\mu^2+\delta^2\right)=6n\end{equation}
and we use $L$ in order to determine the transcendental lattice. 
\begin{table}
\begin{tabular}{|c|c|p{2.7cm}|p{4.3cm}|}
    \hline
    $L^2$ & Value of $\delta$ & Possibles $(\lambda,\mu)$ & Embedding (up to isometry) \\
    \hline\hline
    
    \multirow{3}{*}{$L^2=6$}
     & $\delta=\pm 1$ & $\{(0,0)\}$ & $L\mapsto h$  \\
     \cline{2-4}
     & \multirow{2}{*}{$\delta=0$}
     & $\{(\pm 1,0),(0,\pm 1)\}$ & $L\mapsto e$  \\
     \cline{3-4}
     & & $\{(1,-1),(-1,1)\}$ & $L\mapsto e-f$ \\
     \hline
     \multirow{2}{*}{$L^2=18$}
      & \multirow{2}{*}{$\delta=0$}
      & $\{(1,1),(-1,-1)\}$ & $L\mapsto e+f$ \\
      \cline{3-4}
      & & $\{(2,-1),(-2,1),$ $(1,-2),(-1,2)\}$ & $L\mapsto 2e-f$ \\
      \hline
      \multirow{5}{*}{$L^2=24$}
      & $\delta=\pm 2$ & $\{(0,0)\}$ & $L\mapsto 2h$  \\
      \cline{2-4}
      & \multirow{2}{*}{$\delta=\pm 1$}
      & $\{(1,1),(-1,-1)\}$ & $L\mapsto e+f+h$  \\
      \cline{3-4}
      & & $\{(2,-1),(-2,1),$ $(1,-2),(-1,2)\}$ & $L\mapsto 2e-f+h$  \\
      \cline{2-4}
      & \multirow{2}{*}{$\delta=0$}
      & $\{\pm 2, \pm 2 \}$ & $L\mapsto 2e$  \\
      \cline{3-4}
      & & $\{(2,-2),(-2,2)\}$ & $L\mapsto 2(e-f)$ \\
      \hline
\end{tabular}
\caption{Computation for $m=2$}
\label{table_orbits}
\end{table}
\begin{itemize}
    \item {\bf Case $L^2 = 6$.}
    The possible values of $\delta$ are $\delta \in \{-1,0,1\}$. By the description of $\mathcal O(S^{G_0}(X))$ given in the proof of Lemma \ref{Isometries} one gets that there are 3 possibles orbits, up to isometry, see Table \ref{table_orbits}.
    We study the three cases separately. 
   
   The first case is $(\lambda,\mu, \delta)=(0,0,1)$, i.e. the embedding $L = h$.
   Here we have the equality $S^{G_0} = \langle 6 \rangle \oplus T_X$ by Proposition \ref{Index2} and then we can write $T_X$ on the basis $\{e,f\}$ as
    $$T_X = \begin{pmatrix} 6 & 3 \\ 3 & 6 \end{pmatrix}.$$
     
    For the other two cases we have that $L \oplus T_X$ is a sublattice of index 2 by Proposition \ref{Index2} and we expect that $\det(T_X) = 108$ by Equation \eqref{Det}.  
    Let $(\lambda,\mu, \delta)$ be $(1,0,0)$, which corresponds to $L = e$. %, which correspond to one of the two cases with $\delta=0$. 
    We can compute the transcendental lattice as $T_X = L^\perp \cap S^{G_0}(X)$, obtaining that $T_X$ has generators $\{h, e-2f\}$ and intersection matrix equal to
    $$T_X = \begin{pmatrix} 6 & 0 \\ 0 & 18 \end{pmatrix}.$$
    
   The last case is $(\lambda,\mu, \delta)=(1,-1,0)$, i.e. $L = e-f$. Then the transcendental lattice has generators $\{h,e+f\}$ and intersection matrix
    $$T_X = \begin{pmatrix} 6 & 0 \\ 0 & 18 \end{pmatrix}.$$
    
    \item {\bf Case $L^2 = 18$.}
    First observe that $\lambda^2 + \lambda\mu + \mu^2$ is never equal to 2: if $x$ and $y$ are both odd integers, or if one is odd and the other is even, then $x^2+xy+y^2$ will be odd and so different from 2. Assume that $x$ and $y$ are both even and observe that they are both different from 0. 
    If $x$ and $y$ have the same sign, then $xy>0$ and so $x^2+xy+y^2\geq 3$. It remains to check when $x$ and $y$ have different sign. As the equation is symmetric, we can assume
$$x=2a~\mathrm{and}~y=-2b,~\mathrm{with}~a,b\in\mathbb{Z}_{>0}.$$
    Then $x^2+xy+y^2=4\left(a^2+b^2-ab\right)$ and since $a^2+b^2-ab=(a-b)^2+ab>0$, it is impossible to have $x^2+xy+y^2$ equal to 2.\\
    
    It follows that the only possible value of $\delta$ for \eqref{6n} to hold is $\delta = 0$. There are two orbits of $(\lambda,\mu,\delta)$ in $\mathcal O(S^{G_0}(X))$ (see Table \ref{table_orbits}) and in both cases we have that $L \oplus T_X$ is a sublattice of index 2 on $S^{G_0}(X)$ such that $\det(T_X) = 36$. 

    Let $L$ be $L = e+f$. Then we can compute the transcendental lattice as $T_X = L^\perp \cap S^{G_0}(X)$, obtaining the generators $\{e-f,h\}$ and intersection matrix
    $$T_X = \begin{pmatrix} 6 & 0 \\ 0 & 6 \end{pmatrix}.$$
    
    Now let $ L$ be $L = 2e-f$. Then the transcendental lattice has generators $\{f,h\}$ and intersection matrix
    $$T_X = \begin{pmatrix} 6 & 0 \\ 0 & 6 \end{pmatrix}.$$
    
    \item {\bf Case $L^2 = 24$.}
    In this last case, if $(\lambda,\mu,\delta)$ satisfy \eqref{6n} then $\delta \in \{0,\, \pm 1,\, \pm 2\}$ and we compute the possible orbits up to isometry (see Table \ref{table_orbits}). In all cases we expect $L \oplus T_X$ is a sublattice of index 2 on $S^{G_0}(X)$ such that $\det(T_X) = 27$ by Equation \eqref{Det}. 
    The only primitive cases are $L=e+f+h$ and $L=2e-f+h$ and in both cases one can compute that $$T_X = \begin{pmatrix} 6 & 0 \\ 0 & 72 \end{pmatrix}$$ whose determinant is not 27. Thus there is no possible case with $L^2=24$.
\end{itemize}
The divisibility of the polarization $L$ for each case is computed in Lemma \ref{LemmaDiv}.
\end{proof}

By Remark \ref{unique}, the only Fano variety of lines on a cubic fourfold with symplectic action of $G_0=\Z_3^4:\mathcal A_6$ is the Fano variety $F(Y)$ of lines on the Fermat cubic fourfold \[ Y= \{ (x_0:\dots:x_5) : x_0^3 + \dots + x_5^3 = 0 \}\subset \PP^5. \]
Thus this is also the unique Fano variety of cubic fourfold with symplectic action of $G_0$ and
admitting a non-symplectic involution with $L=h$. Thus the following holds.
\begin{prop}\label{prop-Fermat}
Let $X=F(Y)$ be the Fano variety of lines of a cubic fourfold such that $G_0=\Z_3^4 : \mathcal A_6$ acts faithfully and simplectically on $X$ and $X$ admits a non-symplectic involution. Then $Y$ is the Fermat cubic fourfold.
\end{prop}

\subsubsection{Extensions by an order 3 non-symplectic automorphism}
We classify now  possible transcendental lattices and polarizations of an IHS manifold $X$ of K3$^{[2]}$-type  such that $X$ admits the action of $G_0$ together with a non-symplectic automorphism of order 3.

\begin{prop}\label{Order 3 cases} Let $X$ be an IHS manifold of K3$^{[2]}$-type with symplectic and faithful action of $G_0$. If $X$ admits an order $3$ non-symplectic automorphism, then the only possibility is  $L = h$, with ${\rm div}(L) = 2$ and
\[ T_X =  \begin{pmatrix} 6 & 3 \\ 3 & 6 \end{pmatrix}.\]

Moreover, if $X=F(Y)$ is the Fano variety of lines of a cubic fourfold, then $Y$ is the Fermat cubic fourfold, $G_0=\Z_3^4 : \mathcal A_6$ acts faithfully and simplectically on $X$ and $X$ admits an order 3 non-symplectic automorphism. 
\end{prop}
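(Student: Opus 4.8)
<br>

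The plan is to prove Proposition \ref{Order 3 cases} in two parts, mirroring the structure of the order-$2$ analysis but exploiting the much stronger rigidity imposed by an order-$3$ isometry. First I would determine the admissible transcendental lattices. The key input is Lemma \ref{Admits}: since $\mu_3$ acts on $T_X$ by an order-$3$ isometry and $T_X$ is positive-definite, even, of rank $2$, the lattice $T_X$ must have the form $\left(\begin{smallmatrix} 2a & a \\ a & 2a \end{smallmatrix}\right)$, so that $\det(T_X) = 4a^2 - a^2 = 3a^2$. On the other hand, by Remark \ref{ValuesLattice} every entry of $T_X$ lies in the appropriate residue class forced by $S^{G_0}(X)$, so writing $T_X = \left(\begin{smallmatrix} 6\alpha & 3\beta \\ 3\beta & 6\gamma \end{smallmatrix}\right)$ and comparing with the shape from Lemma \ref{Admits} forces $6\alpha = 6\gamma$ and $3\beta = 6\alpha/2$; I expect this to pin down $\det(T_X) \in \{27, \dots\}$ very tightly.

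Next I would run the index computation exactly as in Proposition \ref{PossibleEmbedding}, but with $m = 3$. By Proposition \ref{Index2} the sublattice $\langle 6n \rangle \oplus T_X$ has index $m = 3$ in $S^{G_0}(X)$ (or equals it when $n = 1$, $L = \pm h$). Using \cite[Lemma I.2.1]{BarthHulek} the index relation reads
\[
9 = [S^{G_0}(X) : \langle 6n \rangle \oplus T_X]^2 = \frac{\det(\langle 6n\rangle \oplus T_X)}{\det(S^{G_0}(X))} = \frac{54 n (4ac - b^2)}{162},
\]
which gives $27 = n(4ac - b^2)$. Combining this with the constraint $\det(T_X) = 9(4ac-b^2) = 3a^2$ coming from the order-$3$ isometry restricts the possible $n$ severely; I expect the divisibility $b^2 \equiv 3 \bmod 4$ obstruction used in Proposition \ref{Index2} to kill the proper-index cases, leaving only $n = 1$, $L = \pm h$, and $S^{G_0}(X) = L \oplus T_X = \langle 6 \rangle \oplus A_2(3)$. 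This is where the real work lies: one must check that none of the candidate $(n, a, b, c)$ with $n > 1$ simultaneously satisfies the index equation \emph{and} produces a lattice of the order-$3$-admissible shape, and that the unique surviving case does admit an order-$3$ isometry extending compatibly to all of $S^{G_0}(X)$ via Lemma \ref{ExtendComp}.

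For the second assertion I would invoke the Fano-variety machinery already assembled. By Mongardi \cite[Example 7.4.2]{Mon-thesis} the Fano variety of lines $X = F(Y)$ of the Fermat cubic fourfold has $T_{F(Y)} \simeq A_2(3)$ and $S^{G_0}(F(Y)) \simeq \langle 6 \rangle \oplus A_2(3)$, which is precisely the unique case $L = h$, $S^{G_0}(X) = L \oplus T_X$ found in the first part. Since $A_2(3) = \left(\begin{smallmatrix} 6 & 3 \\ 3 & 6 \end{smallmatrix}\right)$ has the shape $\left(\begin{smallmatrix} 2a & a \\ a & 2a \end{smallmatrix}\right)$ with $a = 3$, Lemma \ref{Admits} guarantees an order-$3$ isometry of $T_X$. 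The point is then to realize this isometry geometrically: the symplectic action of $G_0 = \Z_3^4 : \mathcal A_6$ on $F(Y)$ comes from the action of $\Z_3^5 : \mathcal S_6$ on the Fermat cubic via Lemma \ref{AutInjective}, and the quotient $G/G_0$ supplies a non-symplectic automorphism. I would identify an element of $\Z_3^5 : \mathcal S_6$ acting on $\omega_X$ by a primitive cube root of unity — for instance scaling one Fermat coordinate by $\zeta_3$ — whose induced action on $T_X$ has order $3$; by Lemma \ref{ActionTX}(1) such an automorphism is non-symplectic, completing the claim. The main obstacle throughout is the bookkeeping in the first part: ensuring the order-$3$-admissibility constraint from Lemma \ref{Admits} is correctly intersected with the index and parity constraints so that $n = 1$ is genuinely the only survivor.
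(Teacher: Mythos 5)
Your overall strategy for the first claim matches the paper's (Lemma \ref{Admits} plus Remark \ref{ValuesLattice} force $T_X \simeq a\cdot A_2(3)$ with $\det(T_X)=27a^2$, then the index computation via \cite[Lemma I.2.1]{BarthHulek}), but there is a genuine gap at the point where you expect congruence obstructions to finish the job. Under the shape constraint the index relation $27=n(4ac-b^2)$ becomes $9=na^2$, and this \emph{does} have a solution with $n>1$, namely $(n,a)=(9,1)$: a polarization $L$ with $L^2=54$, $T_X=A_2(3)$, and $L\oplus T_X$ of index $3$ in $S^{G_0}(X)$ passes the index equation, has the order-$3$-admissible shape, and is not excluded by any congruence (the $b^2\equiv 3\pmod 4$ obstruction you cite only kills $n=3$ and $n=27$). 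So the verification you describe as ``the real work'' would not leave $n=1$ as the only survivor. The paper eliminates $n=9$ by a mechanism your proposal never invokes: \emph{primitivity} of $L$. Writing $L=\lambda e+\mu f+\delta h$, equation \eqref{6n} becomes $\lambda^2+\lambda\mu+\mu^2+\delta^2=9$; running over $\delta\in\{0,\pm1,\pm2,\pm3\}$ the complementary values $5$ and $8$ are not represented by the form $\lambda^2+\lambda\mu+\mu^2$, so the only vectors of square $54$ in $S^{G_0}(X)$ are $\pm3e$, $\pm3f$, $\pm3(e-f)$, $\pm3h$ --- each three times a square-$6$ vector, hence non-primitive, contradicting that a polarization class generating $S^G(X)\cap T_X^\perp$ is primitive. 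Without this enumeration your argument stalls exactly at the case that needs to be excluded.

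For the second claim, your explicit construction is correct and in fact more concrete than the paper's: the scaling $x_0\mapsto \zeta_3 x_0$ preserves the Fermat cubic and multiplies the generator of $H^{3,1}(Y)$, hence $\omega_{F(Y)}$, by a primitive cube root of unity, so it induces an order-$3$ non-symplectic automorphism of $F(Y)$; the paper instead simply combines Remark \ref{unique} with Mongardi's computation $T_{F(Y)}\simeq A_2(3)$, $S^{G_0}(F(Y))\simeq\langle 6\rangle\oplus A_2(3)$. However, you only prove the existence direction. The proposition also asserts that \emph{if} $X=F(Y)$ for some cubic fourfold $Y$ (with the symplectic $G_0$-action), \emph{then} $Y$ is the Fermat cubic; this uniqueness is exactly what Remark \ref{unique} supplies (the Fermat cubic is the unique cubic fourfold admitting $\Z_3^5:\mathcal S_6$, together with $\Aut(Y)\simeq\Aut_h(F(Y))$ from Lemma \ref{AutInjective}), and it is absent from your proposal.
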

\begin{proof}
By Remark \ref{ValuesLattice} and Lemma \ref{Admits} 
we have that if $T_X$ admits an order 3 automorphism then 
\[ T_X = \begin{pmatrix} 6a & 3a  \\ 3a & 6a \end{pmatrix},\quad a \in \Z_{> 0}.\]

Let $L = \lambda e + \mu f + \delta h,\lambda,\mu,\nu\in\Z$ with $L^2 = 6n$. 
For $n = 1$, $L^2 = 6$ the only possibility such that $T_X = a\cdot \begin{pmatrix}
    6&3\\3&6
\end{pmatrix}$ is $S^{G_0}(X) = \langle 6 \rangle \oplus \begin{pmatrix}
    6&3\\3&6
\end{pmatrix}$ and $L = \pm h$. 

If we assume that $n > 1$, then $L \oplus T_X$ is a sublattice of index 3 in $S^{G_0}(X)$ by Proposition \ref{Index2} and we have that:
\[ 9 = [S^{G_0}(X):\langle6n\rangle\oplus T_X]^2 = \frac{\det(\langle6n\rangle \oplus T_X)}{\det(S^{G_0}(X))} = \frac{6n(27a^2)}{162}. \]% \
This implies $9 = na^2$ and thus $n = 9$ and $T_X = \begin{pmatrix}
    6&3\\3&6
\end{pmatrix}$. 

In this case let $L^2 = 54$, i.e. by \eqref{6n}
\[\lambda^2+\lambda\mu+\mu^2+\delta^2=9.\]
We study each case separately according to the value of $\delta \in \{0, \pm 1, \pm 2, \pm 3\}$.
\begin{itemize}
    \item If $\delta = \pm 3$ then $\lambda = \mu = 0$, which implies $L = 3h$ and this is not a primitive case.
    \item If $\delta = \pm 2$ then $\lambda^2 + \lambda\mu + \mu^2 = 5$, which have no integers solutions.
    \item If $\delta = \pm 1$ then $\lambda^2 + \lambda\mu + \mu^2 = 8$, which have no integers solutions.
    \item If $\delta = 0$ then $\lambda^2 + \lambda\mu + \mu^2 = 9$ and $L$ is one of the following:
    \[\pm 3e,\ \pm 3f,\ \pm 3(e-f)\]
    but in each case the polarization is non-primitive. 
\end{itemize}

By Remark \ref{unique}, the only Fano variety of cubic fourfold with symplectic action of $G_0=\Z_3^4:\mathcal A_6$ is the Fano variety of lines of the Fermat cubic fourfold $Y\subset\PP^5$.
Thus this is also the unique Fano variety of cubic fourfold with symplectic action of $G_0$ 
admitting a non-symplectic automorphism of order 3 with $L=h$. The divisibility of the polarization $L$ is computed in Lemma \ref{LemmaDiv}.
\end{proof}

As we observed in Remark \ref{ord6}, studying the existence of a non-symplectic involution and a non symplectic automorphism of order 3 is enough to ensure the existence of a non-symplectic automorphism of order 6. Thus as a consequence of Propositions \ref{Order 2 cases}, \ref{prop-Fermat} and \ref{Order 3 cases},  i.e. the results for the cases $m=2$ and $m=3$, the following result is straightforward.

\begin{prop}\label{teo_final}
Let $X$ be an IHS manifold of K3$^{[2]}$-type with symplectic and faithful action of $G_0$.
If $X$ admits an order $6$ non-symplectic automorphism, which would means that $X$ admits the action of the biggest possible finite group $G$ with $|G|=29160\times 6=174960$, then the only possibility is $L = h$, ${\rm div}(L) = 2$ and
\[ T_X =  \begin{pmatrix} 6 & 3 \\ 3 & 6 \end{pmatrix}.\]
Moreover, the Fano variety of lines $F(Y)$ of the Fermat cubic fourfold \[ Y= \{ (x_0:\dots:x_5) : x_0^3 + \dots + x_5^3 = 0 \}\subset \PP^5. \]
is an example of this case, admitting the action of the biggest possible finite group $G\subset\Aut(X)$.
\end{prop}

\subsection{Towards Unicity}\label{TowardsUnicity}
We suspect that the Fano variety of lines of the Fermat cubic fourfold is the only IHS manifold of K3$^{[2]}$-type admitting the action of a group $G$ with the maximal possible order, that is $|G| = 174960$.

Let $X$ be an IHS manifold of K3$^{[2]}$-type. 
By \cite{Mon-thesis}, there is a unique action of $G_0$ on $H^2(X,\Z)$, and if we add the requirement that it admits an order 6 non-symplectic automorphism, then $T_X = \begin{pmatrix}
    6&3\\3&6
\end{pmatrix}$ and $NS(X) = \langle-2\rangle \oplus \begin{pmatrix}
    -6&-3\\-3&-6
\end{pmatrix} \oplus U \oplus E_8(-1)^{\oplus 2}$. 

The IHS manifold $X$ is polarized by an  ample class $L$ such that $L^2 = 6$ and $\rm{div}(L) = 2$. 
Let ${}^2\!{\mathcal{M}_{2n}^{(2)}}$ be the moduli space parametrizing IHS manifolds of K3$^{[2]}$-type with a polarization of square $2n$ and divisibility $2$. 
The general element of  ${}^2\!{\mathcal{M}_6^{(2)}}$
%$\prescript{2}{}{\mathcal{M}_6^{(2)}}$ 
is a Fano variety of lines of cubic fourfolds (see \cite[Section 3.6]{Debarre}). 
Thus the natural question is: "\emph{Is there any other IHS manifold in the moduli space ${}^2\!{\mathcal{M}_6^{(2)}}$
%$\prescript{2}{}{\mathcal{M}_6^{(2)}}$ 
which is not the Fano variety of lines of a cubic fourfold and such that it admits the action of the biggest possible group?}". 
%By Remark \ref{unique} we know that the Fano variety of lines $F(Y)$ of the Fermat cubic fourfold $Y$ is the only Fano variety of lines of a cubic fourfold with the action of $G_0$. 
If the answer is yes, the IHS manifold would be a special element in ${}^2\!{\mathcal{M}_6^{(2)}}$.
%$\prescript{2}{}{\mathcal{M}_6^{(2)}}$.

An $M$-polarized IHS manifold of K3$^{[2]}$-type is a tuple $(X,\nu, j)$ where $X$ is a projective IHS manifold of  K3$^{[2]}$-type with a marking $\nu: H^2(X,\Z) \to \Lambda_2$ and $j : NS(X) \hookrightarrow M$ is a primitive embedding of lattices such that $j^{-1}(M) \subset NS(X)$ and $\nu|_{NS(X)} = j$.

Let $N := M \cap \Lambda_2$ and set
\[ \Omega_M := \{ x \in \PP(N\otimes\C) : x^2=0, (x,\bar x) > 0 \}, \]
the period domain $\Omega_M$, which consist of a disjoint union of two components of dimension $21-\rho(X)$.

As in \cite[Section 5.2]{BCS}, one can construct a moduli space $\mathcal{K}_M$ of $M$-polarized IHS manifold of K3$^{[2]}$-type with the respective period domain $\Omega_M$ and Period map $\mathcal{P} : \mathcal{K}_M \to \Omega_M$. 

Let $X$ be the Fano variety of lines of the Fermat cubic fourfold and let $M$ be a sublattice of $\Lambda_2$ isometric to $NS(X)$. Since $\OO(M) \to \OO(q_M)$ is surjective, we can consider a marking $\nu: H^2(X,\Z) \to \Lambda_2$ such that $\nu^{-1}(M) \subset NS(X)$ is a primitive embedding. Since $\rho(X) = 21$, $\Omega_M$ consists of two periods that correspond to an IHS manifold whose N\'eron-Severi group $NS(X)$ is isometric to $M$. These two points come from the choice of an orientation in the positive cone of $\Lambda_2$ (see \cite[Corollary 9.10]{Markman}).

Every possible embedding of $NS(X)$ into $H^2(X,\Z)$ gives us a different family, but there is a unique embedding of $NS(X)$ with the orthogonal lattice $T_X$ required. 
By Theorem \ref{surj}, the fiber of $\mathcal P$ on each of these points consists of inseparable points % on $P_0^{-1}(x)$, 
which corresponds to IHS manifolds birational to $F(Y)$ by \cite[Theorem 2.2]{Markman}. 
Thus the question becomes: "\emph{Is there an IHS manifold $X$ of K3$^{[2]}$-type 
which is birational to $F(Y)$,
where $Y$
is the Fermat cubic fourfold, with the aforementioned $NS(X)$ and $ T_X$, the polarization $L$, and admitting the action of the biggest possible group?}".
We are currently working on this open question.
\subsection{Existence} 
In this subsection we give the construction to prove the existence of each case found in Section \ref{Section-Extensions}. 

\begin{thm}
    Each case of Proposition \ref{Order 2 cases} and Proposition \ref{Order 3 cases} occurs as a pair of $(X,G)$ with $X$ a IHS manifold of K3$^{[2]}$-type and $G \subset \Aut(X)$.
\end{thm}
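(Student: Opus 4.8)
The plan is to realize each transcendental lattice $T_X$ from Propositions \ref{Order 2 cases} and \ref{Order 3 cases} as the genuine transcendental lattice of an IHS manifold of K3$^{[2]}$-type carrying the full group $G$, and the natural tool is the surjectivity of the period map (Theorem \ref{surj}) together with Mongardi's existence criterion (Theorem \ref{MongardiExistence}). First I would fix the lattice $L \cong \Lambda_2$ and recall that the symplectic group $G_0 = \Z_3^4:\mathcal A_6$ is already known to be induced by symplectic automorphisms for some marked $(X,\eta)$ of K3$^{[2]}$-type, with fixed invariant lattice $S^{G_0}(X)$ as given by \cite{HM}; this is the content one gets by checking the two conditions of Theorem \ref{MongardiExistence} on $S_{G_0}(L)$, namely negative-definiteness and the absence of vectors of norm $-2$ or of norm $-10$ divisible by $2$ in $L^\vee$. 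Since $S^{G_0}(X)$ and its orthogonal complement $S_{G_0}(X)$ are intrinsic to the abstract action, this part is independent of the individual case.

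Next I would treat the non-symplectic automorphism. For each case the candidate isometry $i$ of $S^{G_0}(X)$ is prescribed: it fixes the polarization $L$ and acts with order $m\in\{2,3,6\}$ on $T_X$ (as an element of $\OO(T_X)$ of the relevant order, which exists by Lemma \ref{Admits} in the order-$3$ case and by $-\mathrm{Id}$ in the order-$2$ case). The key step is to extend $i$ from $S^{G_0}(X)$ to an isometry $\tilde i$ of the whole lattice $L \cong \Lambda_2$ that commutes appropriately with the $G_0$-action, i.e. normalizes $G_0$ inside $\OO(L)$. I would do this by the gluing technique of Lemma \ref{ExtendComp}: one has the primitive embedding $S^{G_0}(X) \hookrightarrow \Lambda_2$ with orthogonal complement $S_{G_0}(X)$, and since $G_0$ acts trivially on $A_{S_{G_0}(X)}$ (Lemma \ref{InvLattice}), the glue map between discriminant groups is controlled; I would verify that the induced action $\bar i$ on $A_{S^{G_0}(X)}$ is compatible with an isometry of $S_{G_0}(X)$ via the anti-isometry $\gamma$, which by Lemma \ref{ExtendComp} yields the desired $\tilde i \in \OO(\Lambda_2)$.

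Having produced a subgroup $G = \langle \tilde i\rangle G_0 \subset \OO(\Lambda_2)$ with the correct $G_0$ and the correct induced action on $T_X$, I would then invoke surjectivity of the period map to produce the manifold: choosing a period $\eta(H^{2,0}(X)) \in \Omega_\Lambda$ lying in the eigenspace of $\tilde i$ on $T_X\otimes\C$ corresponding to a primitive $m$-th root of unity (so that, by Lemma \ref{ActionTX}, the automorphism it induces is genuinely non-symplectic of order exactly $m$), one obtains a marked IHS manifold $(X,\eta)$ whose Néron–Severi and transcendental lattices are exactly $NS(X)=S_G$ and the prescribed $T_X$, and such that $\eta^{-1}(G)$ is induced by automorphisms. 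Here I would use the fact that $\tilde i$ fixes the polarization class $L$, which is ample (possibly after composing with an element of the Weyl group / applying a sign so that $L$ lies in the positive cone), guaranteeing projectivity and that $G$ lands in $\Aut(X)$ rather than merely $\OO(\Lambda_2)$. I expect the main obstacle to be precisely this last identification — checking that the lattice-theoretic group $G$ is effectively realized by \emph{automorphisms} of the period point $X$ and not only by Hodge isometries, which requires controlling the ample cone (ruling out the wall-crossing $(-2)$-classes and the special $(-10)$-classes of Theorem \ref{MongardiExistence}) and verifying that $\tilde i$ preserves a Kähler class; the gluing/extension step is the other delicate point, since one must ensure $\tilde i$ normalizes $G_0$ and not merely commutes with its invariant lattice.
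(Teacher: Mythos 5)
Your proposal follows essentially the same route as the paper's proof: first realize the symplectic action of $G_0$ via Mongardi's criterion (Theorem \ref{MongardiExistence}) with the invariant/co-invariant lattices of \cite{HM}, then define the non-symplectic isometry on $T_X \oplus L$ (as $-\mathrm{Id}$ on $T_X$, identity on $L$), extend it to $S^{G_0}(X)$ and then to the full lattice $\Lambda_2$ via the gluing Lemmas \ref{ExtendOver} and \ref{ExtendComp} (the key input being the surjectivity of $\OO(S_{G_0}(X)) \to \OO(A_{S_{G_0}(X)})$ from \cite{HM}), and finally realize the resulting lattice group by actual automorphisms. The only substantive difference is that the paper discharges the compatibility checks $\gamma \circ \bar{\psi} = \bar{\varphi} \circ \gamma$ case by case with explicit matrices (MAGMA computations, Table \ref{OtherCases}), which your plan defers but correctly identifies as the delicate step; indeed your final Torelli-type step (choice of period in an eigenspace, ample cone control) is spelled out more carefully than in the paper itself.
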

\begin{proof}
    Let $\Lambda=\Lambda_2$ be the lattice \eqref{Lattice K3^2} for $n = 2$, i.e.
    \[\Lambda=U^{\oplus3}\oplus E_8(-1)^{\oplus 2}\oplus\langle-2\rangle\]By \cite{HM} there is an embedding of $S^{G_0}(\Lambda)$ into $\Lambda$ such that $S^{G_0}(\Lambda)^{\perp_\Lambda}$ is isometric to $S_{G_0}(\Lambda)$. We can consider lattices $T(\Lambda)$ and $L(\Lambda)$ in $S^{G_0}(\Lambda)$ isometric to $T_X$ and $L$ respectively as in Proposition \ref{Order 2 cases} or \ref{Order 3 cases}. If $T(\Lambda) \oplus L(\Lambda)$ is not isometric to $S^{G_0}(\Lambda)$, then it is isometric to a sublattice of index 2, but in both cases $T^{\perp_\Lambda} \cong S_{G_0}(\Lambda)$, thus by Theorem \ref{MongardiExistence} 
    there is a marked IHS manifold $(X,\eta)$ with $G_0 \subset \Aut(X)$ acting simplectically on $X$ via $\eta$. We can consider the marking such that $T_X \cong T(\Lambda)$ and $S_{G_0}(\Lambda) \cong S_{G_0}(X)$. Furthermore we have that $S^{G_0}(X) \cong S^{G_0}(\Lambda)$ and we can consider $L := \eta^{-1}(L(\Lambda)) \subset S^{G_0}(X)$.

Now, we want to construct the respective non-symplectic automorphism for each $X$. The case of Proposition \ref{Order 3 cases} is analogous thus we will focus on Proposition \ref{Order 2 cases}. If $T := T_X \oplus L$ is not isometric to $S^{G_0}(X)$, there is an isotropic subgroup $H_{S^{G_0}(X)} \leq A_T$ of order 2 associated to $S^{G_0}(X)$. 
By Lemma \ref{ActionTX}, the action on $T_X$ has to be by roots of unity and preserving $\w_X$. Therefore we want to consider the following action on $T$:
\[ \varphi := -id_{2\times2} \oplus id_{1\times1} \curvearrowright T_X \oplus L  \]
i.e. the identity on $L$ and $-id$ on $T_X$. After computing $H_{S^{G_0}(X)}$ in $A_T$, we have to check that $\bar{\varphi}(H_{S^{G_0}(X)}) = H_{S^{G_0}(X)}$ so we can use Lemma \ref{ExtendOver} to extend this automorphism to $S^{G_0}(X)$. We denote by $\varphi$ again the extension of $\varphi$ to $S^{G_0}(X)$. 
\begin{rem}
    If $S^{G_0}(X) = T_X \oplus L$, then we just define $\varphi$ directly. This is the case for (1) in Proposition \ref{Order 2 cases}.
\end{rem}
We can then check that $L$ is fixed by the action of $\langle G_0, \varphi \rangle$ on $S^{G_0}(X)$.

Now, we want to extend our automorphism $\varphi$ from $S^{G_0}(X)$ to $H^2(X,\Z)$ using \cite[Proposition 1.5.1]{Nikulin}. To extend $\varphi$ from $S^{G_0}(X)$ to $H^2(X,\Z)$ we need to consider a gluing morphism $\gamma: H_L(X) \to A_{S^{G_0}(X)}$ where $H_L$ is a subgroup of order 81 of $A_{S_{G_0}}(X) = \Z_3 \times \Z_3 \times \Z_9$. In particular we have that $H_L = A_{S_{G_0}}(X)$. Since the morphism $\OO(S_{G_0}(X)) \to \OO(A_{S_{G_0}(X)})$ is surjective (see \cite[Section 7]{HM} and \cite[Table 9]{HM}), different choices of $\gamma$ produce isomorphic embeddings of $S^{G_0}(X)$ into $H^2(X,\Z)$ with orthogonal lattice $S_{G_0}(X)$.

Having the gluing morphism $\gamma$ we use Lemma \ref{ExtendComp}. For this we need to check if $\gamma$ satisfies: 
\[ \gamma \circ \bar{\psi} = \bar{\varphi} \circ \gamma \]
If $\varphi$ is extendable, then we can consider $G = \langle G_0,\varphi \rangle \subset \OO(H^2(X,\Z))$. Since the morphism
\[ \nu: \Aut(X) \to \OO(H^2(X,\Z)) \]
is injective, we can consider $G$ acting on $X$ with $G_0$ simplectically and $G/G_0 \simeq \langle \varphi \rangle$ by construction and Lemma \ref{ActionTX}.

We apply this method for each case of Proposition \ref{Order 2 cases} and Proposition \ref{Order 3 cases}, checking that the respective automorphism exists and thus proving that each case occurs.

Computations for the proof have been done with MAGMA and using part of codes contained in \cite{HM, W}.
 We now give the details of the computations for the case $L^2 = 6$ with $L = e-f$. The other cases are similar and we give in Table \ref{OtherCases} the necessary information to compute them.
 
When $L=e-f$, $T_X$ has generators $\{e+f, h\}$. Let $T = T_X \oplus L$. We have that 
\[ S^{G_0}(X)/T = \langle f + T \rangle = \langle e + T \rangle  \]
is an order 2 group such that $e+f \in T$. Moreover
\[ A_T = \Big\langle \frac{e+f}{18} + T,\, \frac{f}{3} + T,\, \frac{h}{6} + T \Big\rangle. \]
Then, by Lemma \ref{Overlattices}, there is a subgroup $H_{S^{G_0}(X)}$ in $A_T$ which can be easily seen as 
\[H_{S^{G_0}(X)} = \Big\langle \frac{e+f}{2} + T \Big\rangle.\] Let us consider the following action on $T$:
\[ \varphi := -id_{2\times2} \oplus id_{1\times1} \curvearrowright T_X \oplus L\]
and observe that \[\bar\varphi\left(\frac{e+f}{2}\right) =-\frac{e+f}{2}.\] 
Since $e+f \in T$, we have that $\bar\varphi(H_{S^{G_0}(X)}) = H_{S^{G_0}(X)}$, and thus by Lemma \ref{ExtendOver}, we can extend $\varphi$ to $S^{G_0}(X)$.
Then $\varphi$ is the isometry of $S^{G_0}(X)$ fixing $L = e-f$ represented by the matrix
\[ \varphi = \begin{pmatrix} 0 & -1 & 0 \\ -1 & 0 & 0 \\ 0 & 0 & -1 \end{pmatrix}.\]
Let $\{\bar{g_1},\bar{g_2},\bar{g_3}\}$ be a set of generators of $A_{S_{G_0}(X)}$. 
For the dual of $S^{G_0}(X)$, we take the basis
\[ S^{G_0}(X)^\vee = \Big\langle f_1,f_2,f_3 \Big\rangle \]
where
$f_1 = \frac{1}{3}(2e-f-h),\  f_2 = \frac{-2e}{3} + f,\ f_3=\frac{1}{9}(2e-f)-\frac{h}{6}$,
and such that their corresponding images by the quotient projection, denoted by $\{\bar{f_1}, \bar{f_2}, \bar{f_3}\}$, generate $A_{S^{G_0}(X)}$. 

To extend $\varphi$ from $S^{G_0}(X)$ to $H^2(X,\Z)$ we will consider the gluing morphism $\gamma : A_{S_{G_0}(X)} \to A_{S^{G_0}(X)}$ represented as a matrix in the generators of $A_{S_{G_0}(X)}$:
\[ \gamma = \begin{pmatrix} 0 & 1 & 0 \\ 1 & 0 & 0 \\ 0 & 0 & 2 \end{pmatrix}.\]
We need to find an isometry $\psi \in \OO(S_{G_0}(X))$ such that
\[  \gamma \circ \bar{\psi} =\bar{\varphi} \circ \gamma \]
where $\bar{\psi}$ is the corresponding morphism in $\OO(A_{S_{G_0}(X)})$. In other words, we can compute $\bar{\psi}$ as
\[ \bar{\psi} = \gamma^{-1} \circ \bar{\varphi} \circ \gamma. \]
For this, we can compute directly the action of $\varphi$ on the image of the generators by $\gamma$ using their corresponding pre-image on $S^{G_0}(X)^\vee$. We have 
\begin{align*}
    \varphi(f_1) &= \varphi\left(\frac{1}{3}(2e-f-h)\right) = \frac{1}{3}(e-2f+h)\\% \implies \bar{\phi}(\bar{f_1}) = 2\bar{f_1}. \\
    \varphi(f_2) &= \varphi\left(\frac{-2}{3}e+f\right) = -e+\frac{2}{3}f\\% \implies \bar{\phi}(\bar{f_2}) = \bar{f_2}+12\bar{f_3}. \\
    \varphi(2 f_3) &= \varphi\left(\frac{1}{9}(4e-2f)-\frac{4}{3}h\right) = \frac{2}{9}(e-2f)+\frac{4}{3}h.% \implies \bar{\phi}(2\bar{f_3}) = \bar{f_2}+4\bar{f_3}.
\end{align*}
which implies 
\begin{align*}
    &\bar{\varphi}(\bar{f_1}) = 2\bar{f_1} \\
    &\bar{\varphi}(\bar{f_2}) = \bar{f_2}+12\bar{f_3} \\
    &\bar{\varphi}(2\bar{f_3}) = \bar{f_2}+4\bar{f_3}.
\end{align*}
Then, for each generator of $A_{S_{G_0}(X)}$ we have that
\begin{align*}
    \bar\psi(\bar{g_1}) &= (\gamma^{-1} \circ \bar\varphi \circ \gamma)(\bar{g_1}) = ( \gamma^{-1} \circ \bar\varphi)(\bar{f_2}) = \gamma^{-1}(\bar{f_2}+12\bar{f_3}) = \bar{g_1}+6\bar{g_3}. \\
    \bar\psi(\bar{g_2}) &= (\gamma^{-1} \circ \bar\varphi \circ \gamma)(\bar{g_2}) = ( \gamma^{-1} \circ \bar\varphi)(\bar{f_1}) = \gamma^{-1}(2\bar{f_1}) = 2\bar{g_2}. \\
    \bar\psi(\bar{g_3}) &= (\gamma^{-1} \circ \bar\varphi \circ \gamma)(\bar{g_3}) = ( \gamma^{-1} \circ \bar\varphi)(2\bar{f_3}) = \gamma^{-1}(\bar{f_2}+4\bar{f_3}) = \bar{g_1}+2\bar{g_3}.
\end{align*}
which can be resumed as the following matrix acting on $A_{S_{G_0}(X)}$:
\[ \bar{\psi} = \begin{pmatrix} 1 & 0 & 1 \\ 0 & 2 & 0 \\ 6 & 0 & 2 \end{pmatrix}. \]
Then we check that it belongs to $\OO(A_{S_{G_0}(X)})$. 
Since the morphism $\OO(S_{G_0}(X)) \to \OO(A_{S_{G_0}(X)})$ is surjective as mentioned before, there exists $\psi \in \OO(S_{G_0}(X))$. Thus, by Lemma \ref{ExtendComp} we can extend $\varphi$ to $H^2(X,\Z)$ such that it acts as $\psi$ on $S_{G_0}(X)$. The existence of the respective IHS manifold has been explained before. 

For the other cases we give in Table \ref{OtherCases} the respective isometry of $S^{G_0}(X)$ and gluing morphism.
\end{proof}

Having proved the existence, we are only missing computing the divisibility of the polarization to characterize each case. We observe that the case $L^2 = 6$ is the only case where ${\rm div}(L)$ can be different from 1 (see \cite{Debarre}). In fact the following holds:

\begin{lem}\label{LemmaDiv}
    Let $L^2=6$. If $L = h$, then ${\rm div}(L) = 2$, while if $L=e-f$ and $L=e$, then ${\rm div}(L) = 1$.
\end{lem}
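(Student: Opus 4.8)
The statement to prove computes the divisibility $\mathrm{div}(L,H^2(X,\Z))$ for the three embeddings with $L^2=6$. Recall that divisibility is an invariant of the full lattice $H^2(X,\Z)\cong \Lambda_2 = U^{\oplus 3}\oplus E_8(-1)^{\oplus 2}\oplus\langle -2\rangle$, not merely of $S^{G_0}(X)$, so the first thing to pin down is how $L$ sits inside the big lattice. The plan is to compute $\mathrm{div}(L)$ by understanding the image of the functional $(L,-)$ on $H^2(X,\Z)$, equivalently by locating the class $L^\vee := L/L^2 \in L\otimes\Q$ in the dual lattice and determining the order of its image in the discriminant group $A_{H^2(X,\Z)}\cong \Z/2$.

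First I would recall the general mechanism: for a primitive vector $L$ with $L^2=6$, one has $\mathrm{div}(L)\mid 6$, but the ambient lattice $\Lambda_2$ has discriminant group $\Z/2$, so $L/\mathrm{div}(L)\in L^\vee\subset H^2(X,\Z)^\vee$ must map into $A_{H^2(X,\Z)}\cong\Z/2$; this forces $\mathrm{div}(L)\in\{1,2\}$ in the relevant situations. The divisibility equals $2$ precisely when the image of $(L,-):H^2(X,\Z)\to\Z$ is $2\Z$, equivalently when $\tfrac{1}{2}L\in H^2(X,\Z)^\vee$ represents the nontrivial element of $A_{\Lambda_2}$. For the case $L=h$, I would use the identification already established in the paper, namely that $S^{G_0}(X)=\langle 6\rangle\oplus A_2(3)$ with $L=h$ generating the $\langle 6\rangle$ summand and $T_X = A_2(3)$ sitting orthogonally; since this agrees with Mongardi's computation $T_{F(Y)}\simeq A_2(3)$ and $S^{G_0}(F(Y))\simeq\langle 6\rangle\oplus A_2(3)$ for the Fano variety of lines, I can appeal to the known fact that the Plücker polarization $h$ on $F(Y)$ satisfies $h^2=6$ and $\mathrm{div}(h)=2$, which is stated explicitly in the preliminaries. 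This identifies the first case directly.

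Next I would treat $L=e-f$ and $L=e$. For these, by Proposition \ref{Order 2 cases} the sublattice $L\oplus T_X$ has index $2$ in $S^{G_0}(X)$ with $T_X\cong\langle 6\rangle\oplus\langle 18\rangle$, so $L$ no longer generates a direct summand and I expect $\mathrm{div}(L)=1$. To prove this I would exhibit an explicit vector $w\in H^2(X,\Z)$ with $(L,w)=1$, or equivalently show that $(L,-)$ restricted to $S^{G_0}(X)$ already surjects onto $\Z$; since the Gram matrix on $\{e,f,h\}$ has $(e,f)=3$ and $(e,e)=(f,f)=6$, for $L=e-f$ one computes $(L,e)=6-3=3$ and $(L,f)=3-6=-3$, so the image of $(L,-)$ on $S^{G_0}(X)$ is $3\Z$ — hence the value $1$ must come from the orthogonal complement $S_{G_0}(X)$, and I would use the gluing data (the glue vectors of the primitive embedding $S^{G_0}(X)\hookrightarrow H^2(X,\Z)$ recorded in the existence proof) to produce a class $w$ pairing to $1$ with $L$. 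The same computation for $L=e$ gives image $\{(e,e),(e,f),(e,h)\}=\{6,3,0\}$, again divisible by $3$ on $S^{G_0}(X)$, so the argument is identical.

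The main obstacle I anticipate is the last case: showing $\mathrm{div}(L)=1$ genuinely requires information beyond $S^{G_0}(X)$, because within $S^{G_0}(X)$ the pairing values with $L=e-f$ and $L=e$ are all divisible by $3$. The divisibility can only drop to $1$ through pairing against vectors in the co-invariant part $S_{G_0}(X)$, so the key step is to verify that the primitive embedding $L\oplus T_X\hookrightarrow S^{G_0}(X)\hookrightarrow H^2(X,\Z)$, via the glue group $H_M\subset A_{S^{G_0}(X)}\oplus A_{S_{G_0}(X)}$, contains a glue vector whose $S^{G_0}(X)$-component pairs nontrivially mod $3$ with $L$. Concretely I would argue that since $\mathrm{div}(L)\mid L^2=6$ and $\mathrm{div}(L)$ must divide the order of the image of $\tfrac{1}{\mathrm{div}(L)}L$ in $A_{H^2(X,\Z)}\cong\Z/2$, the only candidates are $1$ and $2$; ruling out $2$ amounts to showing $\tfrac{1}{2}L\notin H^2(X,\Z)^\vee$, which follows because $\tfrac{1}{2}(e-f)$ (resp. $\tfrac12 e$) does not lie in $S^{G_0}(X)^\vee$ — indeed pairing $\tfrac12(e-f)$ against the dual generators $f_1,f_2,f_3$ recorded above yields non-integer values — and the glue with $S_{G_0}(X)$ cannot repair a denominator of $2$ since $A_{S_{G_0}(X)}\cong\Z_3\times\Z_3\times\Z_9$ has no $2$-torsion. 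Hence $\mathrm{div}(L)\neq 2$, forcing $\mathrm{div}(L)=1$ for both $L=e-f$ and $L=e$.
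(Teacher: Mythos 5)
Your proposal reaches the correct conclusions, but by a route genuinely different from the paper's. The paper computes all three divisibilities uniformly from the explicit gluing morphism $\gamma$ recorded in Table \ref{OtherCases}, via the criterion of \cite[Lemma 2.1]{OG6}, $\mathrm{div}(v,L)=\max\{d\in\N : \tfrac{v}{d}\in\gamma(A_{S_{G_0}}(X))^{\perp}\}$: for $L=h$ it checks $\tfrac{h}{2}\in\gamma(A_{S_{G_0}}(X))^{\perp}$, and treats the other cases analogously. You instead split the problem: for $L=h$ you transport the facts $h^2=6$, $\mathrm{div}(h)=2$ known for the Pl\"ucker polarization on $F(Y)$; for $L=e-f$ and $L=e$ you argue arithmetically that a primitive vector of $\Lambda_2$ has divisibility dividing the exponent of $A_{\Lambda_2}\cong\Z/2$, and then rule out $2$ because $\tfrac12 L$ pairs non-integrally with $S^{G_0}(X)$. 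For these last two cases your argument is cleaner and more elementary than the paper's --- it needs no knowledge of $\gamma$ at all, since $S^{G_0}(X)\subset H^2(X,\Z)$ already forces $\mathrm{div}(L,H^2(X,\Z))$ to divide $\mathrm{div}(L,S^{G_0}(X))=3$, and the discriminant group excludes $3$ and $6$. What the paper's method buys is uniformity: it also settles the \emph{positive} assertion $\mathrm{div}(h)=2$, which genuinely requires the glue data, whereas purely internal computations in $S^{G_0}(X)$ can only bound the divisibility.

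Two caveats, both repairable. First, for $L=h$ the transfer from $F(Y)$ to an arbitrary $X$ of case (1) is not automatic: divisibility is an invariant of the embedding of $L$ into the full lattice, not of the isometry class of $(S^{G_0}(X),L,T_X)$ --- a point you yourself stress at the outset. To close this you must invoke the fact, established in the paper's existence proof, that all primitive embeddings of $S^{G_0}(X)$ into $\Lambda_2$ with orthogonal complement $S_{G_0}(X)$ are isomorphic (because $\OO(S_{G_0}(X))\to\OO(A_{S_{G_0}(X)})$ is surjective); only then does the Fano example compute $\mathrm{div}(h)$ for every such $X$. Second, your stated test for $\tfrac12(e-f)\notin S^{G_0}(X)^{\vee}$ is wrong as written: membership in the dual is detected by pairing against lattice vectors $e,f,h$ (indeed $(\tfrac12(e-f),e)=\tfrac32\notin\Z$), not against the dual generators $f_1,f_2,f_3$, whose mutual pairings need not be integral even for genuine dual vectors; your own earlier computation $(L,e)=3$, $(L,f)=-3$ supplies the correct check. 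Relatedly, the closing appeal to the absence of $2$-torsion in $A_{S_{G_0}(X)}$ is superfluous: since $S^{G_0}(X)\subset H^2(X,\Z)$, any element of $H^2(X,\Z)^{\vee}$ lying in $S^{G_0}(X)\otimes\Q$ automatically pairs integrally with $S^{G_0}(X)$, hence lies in $S^{G_0}(X)^{\vee}$; gluing can only shrink the dual, never enlarge it, so no ``repair'' was ever possible.
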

\begin{proof}
    First, we remark that the divisibility of $L$ depends on the possible primitive embeddings of $L \oplus T_X$ into $H^2(X,\Z)$, and thus, of $S^{G_0}(X)$ into $H^2(X,\Z)$.

    The divisibility can be computed using the  characterization given in \cite[Lemma 2.1]{OG6} and adapted to our case:
        \[ {\rm div}(v,L) = \max\{d \in \N : \frac{v}{d} \in \gamma(A_{S_{G_0}(X)})^\perp\}. \]
    Thus, once we know the gluing morphism, we can compute directly the divisibility for each case. In Table \ref{OtherCases} we compiled the necessary information for the corresponding embedding for every case. 
    In particular, let us compute the divisibility when $L = h$. Since $\frac{h}{2} \in S^{G_0}(X)$ we just have to check that $\frac{h}{2} \in \gamma(A_{S_{G_0}(X)})^\perp$. We have that 
    \[ \gamma(A_{S_{G_0}(X)}) = \langle f_1, f_2, f_1 + 2f_3 \rangle \simeq \langle f_1, f_2, 2f_3 \rangle \] 
    and thus $\frac{h}{2} \in \gamma(A_{S_{G_0}(X)})^\perp$. This implies that ${\rm div}(L) = 2$. The other cases are similar.
\end{proof}

\begin{table}[H]
    \centering
    \begin{tabular}{c|c|c|c|c|c}
        Polarization & Generators of & Isometry & Order of  & Gluing Morphism & Isometry \\ %in $\OO(A_{S_{G_0}(X)})$ \\
        $L$&$T_X$&$\varphi\in\OO(S^{G_0}(X))$& Isometry & $\gamma$&$\bar\psi\in\OO(A_{S_{G_0}(X)})$\\
        \hline \hline
        $L=h$ & $\{e,f\}$&$\begin{pmatrix} -1& 0 & 0 \\ 0 & -1 & 0 \\ 0 & 0 & 1 \end{pmatrix}$ & 2 &
        $\begin{pmatrix} 0 & 1 & 1 \\ 1 & 0 & 0 \\ 0 & 0 & 2 \end{pmatrix}$ &
        $\begin{pmatrix} 2 & 0 & 0 \\ 0 & 1 & 1 \\ 0 & 6 & 2 \end{pmatrix}$
        \\
        \hline
        $L=e-f$ & $\{h,e+f\}$&
        $\begin{pmatrix} 0 & -1 & 0 \\ -1 & 0 & 0 \\ 0 & 0 & -1 \end{pmatrix}$ & 2 &
        $\begin{pmatrix} 0 & 1 & 0 \\ 1 & 0 & 0 \\ 0 & 0 & 2 \end{pmatrix}$ &
        $\begin{pmatrix} 1 & 0 & 1 \\ 0 & 2 & 0 \\ 6 & 0 & 2 \end{pmatrix}$\\

        \hline
        $L=e$ & $\{h,e-2f\}$&
        $\begin{pmatrix} 1 & 1 & 0 \\ 0 & -1 & 0 \\ 0 & 0 & -1 \end{pmatrix}$ & 2 &
        $\begin{pmatrix} 0 & 1 & 1 \\ 2 & 0 & 0 \\ 0 & 0 & 2 \end{pmatrix}$ & $\begin{pmatrix} 1 & 0 & 1 \\ 0 & 2 & 0 \\ 0 & 0 & 8 \end{pmatrix}$
        \\
        \hline
        $L=e+f$ &$\{e-f,h\}$&
        $\begin{pmatrix} 0 & 1 & 0 \\ 1 & 0 & 0 \\ 0 & 0 & -1 \end{pmatrix}$ & 2 &
        $\begin{pmatrix} 0 & 2 & 1 \\ 1 & 0 & 0 \\ 0 & 0 & 2 \end{pmatrix}$ & $\begin{pmatrix} 2 & 0 & 2 \\ 0 & 2 & 1 \\ 3 & 6 & 4 \end{pmatrix}$
        \\
        \hline
        $L=2e-f$ &$\{f,h\}$&
        $\begin{pmatrix} 1 & 0 & 0 \\ -1 & -1 & 0 \\ 0 & 0 & -1 \end{pmatrix}$ & 2 &
        $\begin{pmatrix} 1 & 1 & 1 \\ 0 & 1 & 0 \\ 6 & 0 & 2 \end{pmatrix}$ & $\begin{pmatrix} 1 & 0 & 2 \\ 0 & 2 & 0 \\ 0 & 0 & 1 \end{pmatrix}$ \\
        \hline
        $L=h$ & $\{e,f\}$&$\begin{pmatrix} -1 & -1 & 0 \\ 1 & 0 & 0 \\ 0 & 0 & 1 \end{pmatrix}$ & 3 &
        $\begin{pmatrix} 1 & 1 & 0 \\ 2 & 1 & 0 \\ 0 & 0 & 2 \end{pmatrix}$ &
        $\begin{pmatrix} 1 & 0 & 2 \\ 0 & 1 & 1 \\ 6 & 3 & 7 \end{pmatrix}$
        \\
        \hline
        $L=h$ & $\{e,f\}$&$\begin{pmatrix} 1 & 1 & 0 \\ -1 & 0 & 0 \\ 0 & 0 & 1 \end{pmatrix}$ & 6 &
        $\begin{pmatrix} 0 & 1 & 1 \\ 1 & 0 & 0 \\ 0 & 0 & 2 \end{pmatrix}$ &
        $\begin{pmatrix} 2 & 0 & 1 \\ 0 & 1 & 1 \\ 6 & 6 & 5 \end{pmatrix}$
        \\
      %  \hline 
    \end{tabular}
    \caption{Data for all cases}
    \label{OtherCases}
\end{table}
\bibliographystyle{aomalpha}%is-alpha}%amsalpha
\bibliography{references}
\end{document}